\documentclass[12pt,psfig]{article}
\usepackage[toc,page]{appendix}
\usepackage{amsmath, amsthm, amssymb,amsfonts,amscd}
\usepackage{graphicx,epsfig,latexsym}
\usepackage{cite,calc,xcolor,subfigmat,mathrsfs,dsfont}
\usepackage{graphicx,epsfig}
\usepackage{amsfonts}
\usepackage{amscd}
\usepackage{mathrsfs}
\usepackage{enumerate}
\usepackage{latexsym}
\usepackage{lineno}
\usepackage{pst-node}
\usepackage{tikz-cd}

\def\zz{\mathfrak{a}}
\def\del{\mathfrak{b}}
\def\na{a}

\def\nM{\mathsf{M}}
\def\nN{\mathsf{N}}
\def\nH{\mathsf{H}}
\def\nA{\mathsf{A}}

\def\nB{\mathsf{B}}
\def\nT{\mathsf{T}}

\def\nX{\mathsf{X}}

\def\nZ{\mathsf{Z}}

\newcommand{\Sl}{\mathfrak{sl}_2}

\newcommand{{\E}}{{\mathsf E}}
\newcommand{\0}{\mathbf{0}}

\newcommand{\nv}{\mathsf{v}}
\newcommand{\nw}{\mathsf{w}}

\newcommand{\n}{\mathsf{n}}

\newcommand{\D}{{\partial }}
\newtheorem{thm}{Theorem}[section]
\newtheorem{cor}[thm]{Corollary}
\newtheorem{lem}[thm]{Lemma}
\newtheorem{prop}[thm]{Proposition}
\theoremstyle{definition}
\newtheorem{defn}[thm]{Definition}
\newtheorem{exm}[thm]{Example}
\newtheorem{rem}[thm]{Remark}

\numberwithin{equation}{section}
\usepackage{color}
\newcounter{IssueCounter}
\newtheorem{Issue}[IssueCounter]{Issue}
\newcommand{\issue}[2]{
\begin{Issue}[\textcolor{red}{#1}]{\textcolor{blue}{#2}}\end{Issue}}

\setcounter{secnumdepth} {5}
\setcounter{tocdepth} {5}

\oddsidemargin=-0.40in
\topmargin= -0.8in   

\oddsidemargin=0.00in
\textheight= 9.35in 
\textwidth=6.5in
\parindent= 0.3in

\def\be {\begin{equation}}
\def\ee {\end{equation}}
\def\ba {\begin{eqnarray}}
\def\ea {\end{eqnarray}}
\def\bpr {\begin{proof}}
\def\epr {\end{proof}}
\def\bes {\begin{equation*}}
\def\ees {\end{equation*}}
\def\bas {\begin{eqnarray*}}
\def\eas {\end{eqnarray*}}

\begin{document}
\renewcommand {\thefootnote}{\dag}
\renewcommand {\thefootnote}{\ddag}
\renewcommand {\thefootnote}{ }

\title{Equivariant decomposition of polynomial vector fields}

\author{
Fahimeh Mokhtari\footnote{Email: fahimeh.mokhtari.fm@gmail.com}
\\Jan A. Sanders \footnote{Email: jan.sanders.a@gmail.com}\\
Department of Mathematics, Faculty of Sciences\\
Vrije Universiteit, De Boelelaan 1081a,\\ 1081 HV Amsterdam, The Netherlands
}

  
\maketitle



\begin{abstract}
To compute the unique formal normal form of families of vector fields with nilpotent linear part,
we choose a basis of the Lie algebra consisting of orbits under the linear nilpotent.
This creates a new problem: to find explicit formulas for the structure constants in this new basis.
These  are well known in the 2D case, and recently expressions were found for the 3D case by ad hoc methods.
The goal of the present paper is to formulate a systematic approach to this calculation.

We propose to do this using
a rational method for the inversion of the Clebsch-Gordan coefficients.
We illustrate the method on a family of 3D vector fields and 
compute the unique formal normal form for the Euler  family both in the 2D and 3D case.

\end{abstract}\vspace{-0.2in}
\vspace{0.10in} 

\noindent {\it Keywords}: Invariant theory; Clebsch-Gordan coefficients; Inversion of the Clebsch-Gordan coefficients; Nilpotent singularity; 
Unique normal form; Euler vector field.
\def\sad{{\rm ad}}
\def\mf{\mathfrak}
\section{Introduction}\label{sec:1}
	 Consider formal vector fields in  \(\mathbb{R}^2\) with a fixed nilpotent linear part \(\nN\). In \cite{baider1992further} the following problem was addressed:
for each such vector field, determine the unique formal normal form, that is a form in which the vector field can be transformed by formal transformations such that the only transformations that are still possible without perturbing the normal form style are formal symmetries of the vector field.
		
This was done by constructing a new basis for the formal vector fields in such a way if \(\nv^i\) is a basis vector, so is \(\nv^{i+1}=[\nN,\nv^i]=\sad_\nN \nv^i\).
The inversion of \(\sad_\nN\) is then a simple matter of changing the appropriate index from \(i\) to \(i-1\), if \(i>0\).
Constructing such a basis is not too difficult in lower dimensions. The problem is to compute the structure constants of the Lie algebra in this new basis.
This is the equivariant decomposition of the vector fields we refer to in the title. 

In this paper, we aim to generalize this procedure from \(\mathbb{R}^2\) to \(\mathbb{R}^n\).
Before the reader gets too excited about this, we should point out that we do not aim to solve the unique formal normal form problem for \(n\)-dimensional nilpotent systems.
The fact that even the \(2\)D results are not complete may indicate the size of such an undertaking.
What we want to obtain here is a systematic procedure to compute the structure constants in a basis given by orbits of the nilpotent.
This is a first step towards the determination of the unique formal normal form.

Besides from having a theoretical interest (the classification of formal vector fields
under formal coordinate transformations) we should point out a practical consequence 
of this analysis. If one does not know what the unique formal normal form is then it is difficult
to determine in a bifurcation analysis which parameters are essential and which are not (because some could be eliminated in the normalization procedure). This leads to confusion in the process of automated (numerical) bifurcation analysis.
\begin{rem}
In the sequel, we will occasionally drop the {\em formal}, but it should be clear that there are no results in this paper beyond the formal normal form.
\end{rem}
One might ask, why go to the \(3\)D case if the \(2\)D case is not done yet? To give at least a partial response to this question
we analyze a \(2\)D subcase, the Euler vector fields, that was left open in op. cit., in Section \ref{2DBT}. 
We will see that the \(3\)D result is very similar to the \(2\)D result. This is a phenomenon that is familiar from Classical Invariant Theory.
One can expect that the \(4\)D analysis will be much harder (but then the \(5\)D case should be similar to the \(4\)D).

The complexities of this programme are already clear in the treatment of the \(3\)D irreducible nilpotent singularity in \cite{JF2019vector}.
There the structure constants are computed using Maple computations and extrapolation, a time consuming method which does not seem to be well suited for higher dimensions.
In this paper, we will formulate a procedure to compute the structure constants using two ideas:
\begin{itemize}
\item
Use the Clebsch-Gordan coefficients to express two-tensors in terms of \(\nN\) orbits of transvectants and apply this both in the symmetrical case to products of functions and in the
antisymmetrical case to Lie brackets.
\item
Compute the transvectants, using the fact that we know how to express the elements in \(\ker\sad_\nM\) using a Stanley basis \cite[Chapter 12]{SVM2007}.
This last condition restricts at the moment the dimension \(n\) to be less than \(6\), at least in the irreducible case.
\end{itemize}
We hope that these techniques will enable the computation of the structure constants for the basis of \(\nN\)-orbits for higher dimensions,
although we realize that this will probably need some more ideas in order to be completely carried out (the difficulty being the lack of explicit formulas for the transvectants).
We mention here that if one is only interested in carrying out this program for a  finite jet degree, there should be no difficulty in higher dimensions; it is the infinite-dimensional character
of the problem and our need for explicit formulas that is causing  difficulties.

For the moment the \(3\)D programme of computing the unique normal form with nilpotent singularity will keep us busy for a while, if the experience with the \(2\)D
case, where there are still subcases that are not completely understood, is any indication.

The first step is to construct an \(\mathfrak{sl}_2\)-triple 
\(\langle \nN, {{\mathsf H}}, \nM\rangle\),
 where \({{\mathsf H}}\) and \(\nM\) are also linear vector fields, using the Jacobson-Morozov Lemma (see \cite{knapp2013lie,collingwood1993nilpotent}). 
The Lie bracket relations are:
\ba\label{eq:sl2}
\nonumber
[\nM,\nN]&=&\nH,\\
{[}\nH,\nN]&=&-2\nN,\\
\nonumber
{[}\nH,\nM]&=&2\nM.
\ea
This triple is not uniquely determined, and one can  make choices here that are most suitable for the given situation.
This construction is possible when the linear vector fields form a reductive Lie algebra, as is the case for \(\mf{gl}_n\) and \(\mf{sp}_n\).

\def\im{\mathrm{im}\ }
A formal vector field is said to be in {\em first level normal form in the \(\Sl\)-style} if its nonlinear part is contained in \(\ker \sad_\nM\).
The reason for this is that the finite-dimensional representation theory of \(\Sl\) teaches us that a finite-dimensional \(\Sl\)-module \(V\) is isomorphic (as \(\Sl\)-modules, not as Lie algebras) to  \(\im \sad_\nN\oplus\ker \sad_\nM\) and this goes through in the infinite-dimensional formal vector field setting.

The next step is to define a basis of vector fields of a given degree \(d\) by first giving a basis for the elements in \(\ker\sad_\nM\) of that degree, say \(\{\nv_{d,1},\cdots,\nv_{d,k}\}\) 
	and then adding to the general basis the elements \(\nv_{d,k}^i=\sad_\nN^i \nv_{d,k}, k=1,\cdots,i, i=1,\cdots,\infty\).
Despite the \(\infty\) in this definition, this is a finite set due to the nilpotent character of \(\nN\).
This way, it is easy to see what the preimage under \(\sad_\nN\) is, but the problem is now that one does not have immediately explicit formulas for the structure constants of the Lie algebra with this basis.

\subsection{Outline}
What we want to accomplish in this paper is the following: suppose we have two polynomial vector fields, say \(\sad_\nN^k v \) and \(\sad_\nN^l w\), with \(v,w\in\ker\sad_\nM\).
Can we express the Lie bracket \([\sad_\nM^k v,\sad_\nM^l w]\) in a similar form, that is, as a linear combination of elements of the form \(\sad_\nN^{m_i} u_i, u_i\in\ker\sad_\nM\)?
Our approach will be to find a formula that expresses \(\sad_\nN^k v \otimes \sad_\nN^l w\) as  a linear combination of \(\sad_\nN^{m_i} u_i\), where the \(u_i\in\ker\sad_\nM\) are {\em transvectants}.
Transvectants were invented to be used in Classical Invariant Theory to produce from one or more given covariants a new covariant.
If this sounds like something one could do forever, the results of Gordan and, somewhat more convincingly, Hilbert showed that this is in fact a finite procedure and at some point all the generating
covariants are found. In our context, covariants are associated to the elements in \(\ker\sad_\nM\).
These transvectants are linear combinations of terms of the type \(\sad_\nN^k v \otimes \sad_\nN^l w\) and it might seem that we are running around in circles.
However, the fact that the transvectants are in the kernel of \(\sad_\nM\) gives us a way to explicitly compute them, at least in the \(3\)D case that we will be most interested in.

We then apply these formulas to compute the unique formal normal form for general Eulerian vector fields with given nilpotent linear term in several cases.
\subsection*{Acknowledgments}The authors would like to express their thanks to  professors
Christian Kassel and Tom Koornwinder, for the email exchanges in the process of getting the inversion of the Clebsch-Gordan coefficients right,
and Bob Rink, for reading the manuscript and providing us with valuable suggestions.

\section{Clebsch-Gordan symbols, a rational approach}\label{sec:CGS}

We now turn from the polynomial vector fields to a more abstract approach involving \(\Sl\)-modules. Here we write \(\nN\) for \(\sad_\nN\) to denote the \(\Sl\)-action.

The following is based on \cite[Chapter XIV]{kassel2012quantum}, to which we refer to for some of the technical details taken for granted in the the sequel.
There an explicit method is given to express elements in \(V_m\otimes V_n\) in terms of the \(\nN\)-orbits of their transvectants (these are given in Lemma VII.7.2, loc. cit.).
The only problem with this formula is that it implicitly assumes the norm of the transvectant to be \(1\).
Here we present a corrected version of this, restricted to the classical case \(q=1\), the quantum case should be completely similar.
The rational approach that is followed, see also \cite{rationalCGC2018}, does avoid the square roots that appear in the usual quantum mechanical approach,
where one works with an orthonormal base \cite{van2012group,koornwinder1981clebsch,kirillov1988quantum}; we are happy with orthogonality.
It may also be useful in the categorification programme, see \cite{frenkel2010categorifying}.
\begin{rem}
There is some historical confusion in the use of the word orthogonal in linear algebra, where the term orthogonal matrix is used for a matrix with orthonormal column vectors. When we use orthogonal we mean orthogonal and when we use orthonormal we mean orthonormal.
\end{rem}
Let \(U,V,W\) be finite-dimensional \(\Sl\)-modules. The reader may want to think of polynomial vector fields of some bounded degree or polynomials,
where \(\nN\) is given, and the Jacobson-Morozov Lemma is invoked to obtain the \(\Sl\)-action.
In the sequel we use \(V\) to describe some properties of \(\Sl\)-modules, but this could as well be \(U\) or \(W\).

We know \(V\) to be a direct sum of finite-dimensional \(\mathsf{H}\)-eigenspaces \(V_m\), with \(\dim V_m=m+1\)
where each \(V_m\) is generated by the \(\nN\)-action on a \(\nv_m\in\ker\nM\) with \(\nH \nv_m=m\nv_m\).
We choose a basis for \(\ker \nM\) consisting of eigenvectors, say
 \[\langle \nv^{\{1\}}_{m_1},\ldots,\nv^{\{k_{m_1}\}}_{m_1},\ldots,\nv^{\{1\}}_{m_k},\ldots,\nv^{\{k_{m_k}\}}_{m_k}\rangle.\]
If there is not a given inner product on \(\ker \nM\) we construct the standard dual basis \[\langle \nv^{\{1\star\}}_{m_1},\ldots,\nv^{\{k_{m_1}\star\}}_{m_1},\ldots,\nv^{\{1\star\}}_{m_k},\ldots,\nv^{\{k_{m_k}\star\}}_{m_k}\rangle,\]
and define the standard inner product \(( \nv^{\{i\}}_k, \nv^{\{j\}}_l)_{\ker \nM|V}=\nv^{\{i\star\}}_k(\nv^{\{j\}}_l)=\delta_{i,j}\delta_{k,l}\), with linear extension defined on all of \(\ker \nM\). 

Then we let for given \(\nv_m \in\ker \nM \) with \(\mathsf{H}\)-eigenvalue \(m\),
\bas
\nv^i_m= \nN^i \nv_m,  \nv_m\in\ker \nM, \quad \nv_m^{(i)}=\frac{1}{i!} \nv_m^i, \nN \nv_m^{(i)}=\frac{1}{i!} \nN \nv_m^i=(i+1) \nv^{(i+1)}_{m}. 
\eas
We denote the space spanned by \(\nv_m^i, i=0,\ldots,m\) by \(V_m\).
It follows from standard \(\Sl\)-representation theory that this way we can construct a basis for \(V\).

We then define an induced inner product on \( V\) by defining it on eigenvectors by 
\[ (\nv_m^{(i)},\nw_n^{(j)})_V=\binom{m}{i}\delta_{i,j}\delta_{m,n}(\nv_m,\nw_n)_{\ker \nM|V},\quad \nv_m,\nw_n\in\ker \nM|V,\]
and then by linear extension on all of \(V\). The \(\delta_{m,n}\) is not really necessary here, as it follows from the definition of \((\cdot,\cdot)_{\ker \nM|V}\),
but it is used to emphasize the orthogonality of the \(v_m^{(i)}\).

\begin{lem} 
One has \(\nN^\top=\nM\) and \(\nM^\top=\nN\), where \(\nN^\top\) is the transpose of \(\nN\) with respect to the inner product \((\cdot,\cdot)_V\):
\[
( \nN \nv , \nw )_V =(\nv, \nM \nw)_V, \quad \nv,\nw\in V.
\]
\end{lem}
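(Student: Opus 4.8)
The plan is to verify the single adjointness identity $(\nN\nv,\nw)_V=(\nv,\nM\nw)_V$ directly on the eigenbasis $\{\nv_m^{(i)}\}$ and then extend by bilinearity. The whole design of the inner product points the way: the binomial weights $\binom{m}{i}$ in the definition of $(\cdot,\cdot)_V$ are chosen precisely so that $\nN$ and $\nM$ become mutually transpose, so the proof should amount to checking that one combinatorial identity comes out right.

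First I would record the action of the raising operator that is not yet stated explicitly. The text already gives $\nN\nv_m^{(i)}=(i+1)\nv_m^{(i+1)}$; the companion formula needed is $\nM\nv_m^{(i)}=(m-i+1)\nv_m^{(i-1)}$. This follows from a standard commutator computation inside the triple: since $[\nH,\nN]=-2\nN$ gives $\nH\nN^k=\nN^k(\nH-2k)$, one obtains from $[\nM,\nN]=\nH$ that $[\nM,\nN^i]=i\,\nN^{i-1}(\nH-(i-1))$. Applying this to $\nv_m\in\ker\nM$ with $\nH\nv_m=m\nv_m$ yields $\nM\nN^i\nv_m=i(m-i+1)\nN^{i-1}\nv_m$, and dividing by $i!$ produces the claimed formula for $\nv_m^{(i)}$.

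With both actions available, the core step is a direct comparison on basis vectors $\nv=\nv_m^{(i)}$, $\nw=\nw_n^{(j)}$. The left-hand side equals $(i+1)\binom{m}{i+1}\delta_{i+1,j}\delta_{m,n}(\nv_m,\nw_n)_{\ker\nM|V}$, while the right-hand side equals $(n-j+1)\binom{m}{i}\delta_{i,j-1}\delta_{m,n}(\nv_m,\nw_n)_{\ker\nM|V}$. Both vanish unless $m=n$ and $j=i+1$, and in that case the equality reduces to the binomial identity $(i+1)\binom{m}{i+1}=(m-i)\binom{m}{i}$, which is immediate from the factorial expressions. This establishes $\nN^\top=\nM$; because $(\cdot,\cdot)_V$ is symmetric, interchanging the roles of $\nv$ and $\nw$ gives $\nM^\top=\nN$ as the same statement read backwards.

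I expect the only genuine content to be the off-diagonal bookkeeping, namely lining up the index shift $j=i+1$ with the mismatched weights $\binom{m}{i}$ and $\binom{m}{i+1}$ so that the two sides land on the same scalar. There is no structural obstacle: the $\Sl$-relations do all the heavy lifting in producing the $\nM$-action, and the inner product was defined so that the decisive identity holds automatically. The main point of care is the normalization conventions, the factor $1/i!$ hidden in $\nv_m^{(i)}$ and the placement of the binomial weight on the first index, which must be tracked consistently on both sides.
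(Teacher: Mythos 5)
Your proposal is correct and follows essentially the same route as the paper: both evaluate the two sides of $(\nN\nv,\nw)_V=(\nv,\nM\nw)_V$ on the basis vectors $\nv_m^{(i)}$, $\nw_n^{(j)}$, use the standard $\Sl$-action formula $\nM\nv_m^{(i)}=(m-i+1)\nv_m^{(i-1)}$, and reduce the claim to the identity $(i+1)\binom{m}{i+1}=(m-i)\binom{m}{i}$ before extending by linearity. The only difference is cosmetic: you derive the $\nM$-action from the commutator $[\nM,\nN^i]=i\,\nN^{i-1}(\nH-(i-1))$, which the paper takes for granted.
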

\begin{proof}
We compute the two sides on basis vectors:
\bas
(\nN \nv_m^{(i)},\nw_n^{(j)})_V&=& ( (i+1) \nv_{m}^{(i+1)},\nw_n^{(j)})_V=(i+1)\binom{m}{i+1}\delta_{i+1,j}\delta_{m,n}(\nv_m,\nw_n)_{\ker \nM}
\\&=&(m-i)\binom{m}{i}\delta_{i+1,j}\delta_{m,n}(\nv_m,\nw_n)_{\ker \nM},\\
(\nv_m^{(i)}, \nM \nw_n^{(j)})_V&=& (\nv_m^{i}, \frac{1}{j!}\nM \nN^j \nw_{n})=(\nv_m^{(i)},\frac{1}{(j-1)!} (m-j+1) \nN^{j-1} \nw_{n})_V\\&=&(m-j+1) (\nv_m^{(i)}, \nw_{n}^{(j-1)})_V
= (m-i)  \binom{m}{i} \delta_{i,j-1}\delta_{m,n} (\nv_m,\nw_n)_{\ker \nM}.
\eas
By linear extension this holds  on \(V\).
\end{proof}
\begin{rem}
This could also be formulated as: let \((\cdot,\cdot)_V\) be an inner product on \(V\) such that \(\nN^\top=\nM\).
The defining property can then be readily derived.
\end{rem}

For later use we state:
\begin{prop}\label{prop:binom}
The following binomial identity holds:
\bas
\sum_{i+j=p} \binom{m-i}{m-p} \binom{n-j}{n-p}=\binom{m+n-p+1}{p}.
\eas
\end{prop}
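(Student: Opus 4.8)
The plan is to prove this by the generating-function (negative binomial) method, which reduces the statement to the standard Vandermonde convolution for upper negative indices, namely $\sum_{k}\binom{r+k}{k}\binom{s+p-k}{p-k}=\binom{r+s+p+1}{p}$. The appearance of the $+1$ in $m+n-p+1$ strongly suggests this route.

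First I would rewrite the summand using the symmetry $\binom{a}{b}=\binom{a}{a-b}$. With $i+j=p$ and $i$ running from $0$ to $p$, one has $\binom{m-i}{m-p}=\binom{m-i}{p-i}$ and $\binom{n-j}{n-p}=\binom{n-j}{j}=\binom{n-p+i}{i}$ after substituting $j=p-i$. Writing $m-i=(m-p)+(p-i)$ and $n-j=(n-p)+i$, the two factors become $\binom{(m-p)+(p-i)}{p-i}$ and $\binom{(n-p)+i}{i}$. Next I would recognize each as a Taylor coefficient via the generalized binomial theorem $(1-x)^{-(a+1)}=\sum_{k\ge 0}\binom{a+k}{k}x^{k}$, valid as a formal power series for any $a$: thus $\binom{(m-p)+(p-i)}{p-i}=[x^{p-i}](1-x)^{-(m-p)-1}$ and $\binom{(n-p)+i}{i}=[x^{i}](1-x)^{-(n-p)-1}$. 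Since the two exponents $p-i$ and $i$ always sum to $p$, the full sum $\sum_{i+j=p}(\cdots)$ is exactly the Cauchy-product coefficient $[x^{p}]$ of the product of these two series. Adding the exponents gives $(1-x)^{-(m-p)-1}(1-x)^{-(n-p)-1}=(1-x)^{-(m+n-2p+2)}$, and extracting $[x^{p}]$ yields $\binom{(m+n-2p+1)+p}{p}=\binom{m+n-p+1}{p}$, which is the desired right-hand side.

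The calculation itself is routine; the only point deserving care is the logical status of the identity. Because each $\binom{a}{b}$ extends to a polynomial in its upper argument, and the negative binomial theorem holds as a formal power series identity with these polynomial coefficients, the generating-function argument establishes the equality as a polynomial identity in $m$ and $n$ for each fixed $p$. This covers all the integer values arising in the representation-theoretic application, including the boundary cases where a lower index becomes negative and the corresponding term vanishes under the usual convention. Should one prefer to avoid generating functions, an alternative is induction on $p$ using Pascal's rule on both sides, but the convolution argument is shorter and makes the provenance of the $m+n-p+1$ transparent, so that is the version I would write up.
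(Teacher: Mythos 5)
Your argument is correct and complete. Note that the paper itself does not actually prove Proposition \ref{prop:binom}: its ``proof'' is only the citation ``Cf.~\cite{rationalCGC2018}'', so your write-up supplies a self-contained argument where the paper defers to a reference. The route you take --- rewriting $\binom{m-i}{m-p}=\binom{(m-p)+(p-i)}{p-i}$ and $\binom{n-j}{n-p}=\binom{(n-p)+i}{i}$ and then recognizing the sum over $i+j=p$ as the Cauchy-product coefficient $[x^p]$ of $(1-x)^{-(m-p)-1}(1-x)^{-(n-p)-1}=(1-x)^{-(m+n-2p+2)}$ --- is the standard Vandermonde convolution with negative upper indices, and it has the virtue of making the provenance of the $+1$ in $m+n-p+1$ transparent. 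Your remark on the logical status is also apt, though for the paper's purposes it is not really needed: the paper's summation convention forces $0\le i\le p\le\min(m,n)$, so all binomials stay in the ordinary combinatorial regime and the formal-power-series identity applies directly with nonnegative integer parameters. One trivial slip: the intermediate expression $\binom{n-j}{j}$ in your chain should read $\binom{n-j}{p-j}$, since the complement of $n-p$ in $n-j$ is $(n-j)-(n-p)=p-j=i$, not $j$; the final form $\binom{(n-p)+i}{i}$ that you actually feed into the convolution is the correct one, so nothing downstream is affected.
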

\begin{proof}
Cf. \cite{rationalCGC2018}.
\end{proof}
\begin{defn}
We define the \(p\)th transvectant of \(\nv_m \otimes \nw_{n}\), \[\Join_{m+n-2p}: V_m\otimes W_n \rightarrow U_{m+n-2p}\cap \ker \nM,\] as
\bas
\Join_{m+n-2p}\nv_m \otimes \nw_{n}=
\sum_{i+j=p}  (-1)^i \binom{p}{i}   \frac{\nv^{(i)}_{m}}{\binom{m}{i}}\otimes \frac{\nw^{(j)}_{n}}{\binom{n}{j}}.
\eas
This definition immediately follows from the symbolic method from Classical Invariant Theory, see \cite{olver1999classical}.
Alternative definitions are also in use, with multiplication constants depending on \(m,n,p\), cf Remark \ref{rem:trans}.
The notation with the \(\Join\) is nonstandard, but it reflects the transvection process so nicely that we do not doubt Sylvester would have approved of it.
\end{defn}

\begin{rem}\label{rem:trans}
We compare the definition given in \cite{SVM2007} with the present definition of transvectant:
\bas
(\nv,\nw)^{(p)}&=& p! \sum_{i+j=p} (-1)^i \binom{m-i}{p-i}\binom{n-j}{p-j} \nv_m^i \otimes \nw_n^j
\\&=& \frac{1}{(m-p)!(n-p)!}\sum_{i+j=p} (-1)^i \frac{\binom{p}{i}}{\binom{m}{i}\binom{n}{j}}   \nv_m^{(i)} \otimes \nw_n^{(j)}
\\&=&\frac{1}{(m-p)!(n-p)!}
\Join_{m+n-2p} \nv_{m}\otimes \nw_{n}.
\eas
\end{rem}
\begin{lem}
In order to implement the procedure given in {\em \cite{kassel2012quantum}} correctly, we compute
\bas
\|\Join_{m+n-2p} \nv_m \otimes \nw_n\|^2&=&
 \frac{\binom{m+n-p+1}{p}}{\binom{m}{p}\binom{n}{p}}
 \|\nv_m\otimes \nw_n\|^2.
\eas
\end{lem}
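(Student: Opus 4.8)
The plan is to expand the squared norm by bilinearity and let the orthogonality built into the induced inner product do the heavy lifting. Writing the transvectant as $\Join_{m+n-2p}\nv_m\otimes\nw_n=\sum_{i+j=p}(-1)^i\binom{p}{i}\binom{m}{i}^{-1}\binom{n}{j}^{-1}\,\nv_m^{(i)}\otimes\nw_n^{(j)}$ and using the tensor-product inner product $(\nv\otimes\nw,\nv'\otimes\nw')=(\nv,\nv')_V(\nw,\nw')_W$, I would obtain a double sum over index pairs $(i,j)$ and $(i',j')$ with $i+j=i'+j'=p$. The defining formula for the induced inner product contributes a factor $\binom{m}{i}\delta_{i,i'}(\nv_m,\nv_m)_{\ker\nM}$ from the $V$-slot and $\binom{n}{j}\delta_{j,j'}(\nw_n,\nw_n)_{\ker\nM}$ from the $W$-slot, so every off-diagonal term vanishes and the double sum collapses to the diagonal $i=i'$, $j=j'$. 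On that diagonal the two signs square to $+1$, and the surviving scalar $(\nv_m,\nv_m)_{\ker\nM}(\nw_n,\nw_n)_{\ker\nM}$ is exactly $\|\nv_m\otimes\nw_n\|^2$, which I would pull out in front.

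What remains is the purely numerical sum $\sum_{i+j=p}\binom{p}{i}^2\binom{m}{i}^{-1}\binom{n}{j}^{-1}$, so the entire lemma reduces to showing this equals $\binom{m+n-p+1}{p}\big/\bigl(\binom{m}{p}\binom{n}{p}\bigr)$. The key algebraic step — and I expect it to be the only place needing genuine attention — is the factorial identity $\binom{p}{i}\big/\binom{m}{i}=\binom{m-i}{m-p}\big/\binom{m}{p}$, together with its $n,j$ counterpart. Both follow at once from writing each binomial as a ratio of factorials and using $(m-i)-(m-p)=p-i$ (and $\binom{p}{i}=\binom{p}{j}$ since $i+j=p$). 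Applying these to the two factors of the summand rewrites it as $\binom{m-i}{m-p}\binom{n-j}{n-p}\big/\bigl(\binom{m}{p}\binom{n}{p}\bigr)$.

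Finally I would factor out the $i,j$-independent constant $\binom{m}{p}^{-1}\binom{n}{p}^{-1}$ and recognize $\sum_{i+j=p}\binom{m-i}{m-p}\binom{n-j}{n-p}$ as precisely the left-hand side of Proposition \ref{prop:binom}, which evaluates it to $\binom{m+n-p+1}{p}$. Reassembling the factors yields the claimed norm. The identity then extends from a single pair of highest-weight vectors to all of $V\otimes W$ by the same bilinear extension used to define the inner product. Thus the main obstacle is conceptual rather than computational: one must spot that the transvectant coefficients $\binom{p}{i}\big/\binom{m}{i}$ are exactly the quantities occurring in the proposition's binomial convolution; once that is recognized, the remaining work is bookkeeping with the Kronecker deltas and factorials.
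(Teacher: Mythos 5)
Your proposal is correct and follows essentially the same route as the paper: expand the squared norm, use orthogonality of the $\nv_m^{(i)}\otimes\nw_n^{(j)}$ to collapse to the diagonal sum $\sum_{i+j=p}\binom{p}{i}^2/\bigl(\binom{m}{i}\binom{n}{j}\bigr)$, and evaluate it via Proposition \ref{prop:binom}. The only (cosmetic) difference is that you pass to the form $\binom{m-i}{m-p}\binom{n-j}{n-p}/\bigl(\binom{m}{p}\binom{n}{p}\bigr)$ through the clean identity $\binom{p}{i}/\binom{m}{i}=\binom{m-i}{m-p}/\binom{m}{p}$, while the paper does the equivalent manipulation with explicit factorials.
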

\begin{proof}

One has
\bas
\lefteqn{(\Join_{m_1+n_1-2p_1} \nv_{m_1}\otimes \nw_{n_1},\Join_{m_2+n_2-2p_2} \nv_{m_2}\otimes \nw_{n_2})_{V\otimes W}}&&\\&=&\sum_{i_1=0}^{p_1}\sum_{i_2=0}^{p_2} (-1)^{i_1+i_2} 
\frac{\binom{{p_1}}{i_1}}{\binom{m_1}{i_1}\binom{n_1}{p_1-i_1}}\frac{\binom{{p_2}}{i_2}}{\binom{m_2}{i_2}\binom{n_2}{p_2-i_2}} 
(\nv_{m_1}^{(i_1)},\nv_{m_2}^{(i_2)})_{V}(\nw^{(p_1-i_1)}_{n_1},\nw^{(p_2-i_2)}_{n_2})_{W}
\\&=&\delta_{p_1,p_2}\delta_{m_1,m_2}\delta_{n_1,n_2}\|\nv_m\|^2 \|\nw_n\|^2
\sum_{i=0}^{p} \frac{\binom{{p}}{i}^2}{\binom{m}{i}\binom{n}{p-i}}  ,
\eas
and it follows that the transvectants are orthogonal and that
\bas
\|\Join_{m+n-2p} \nv_{m}\otimes \nw_{n}\|^2&=&\|\nv_m\|^2 \|\nw_n\|^2\sum_{i=0}^p \frac{\binom{p}{i}^2}{\binom{m}{i}\binom{n}{p-i}}
\\&=&\|\nv_m\|^2 \|\nw_n\|^2\frac{p!^2}{m!n!}\sum_{i=0}^p \frac{ (m-i)!  (n-p+i)! }{i!(p-i)!}.
\eas
Using Proposition \ref{prop:binom}
we find
\bas
\sum_{i+j=p} \frac{ (m-i)!  (n-j)! }{i!j!} &=&\sum_{i+j=p} \binom{m-i}{m-p}\binom{n-j}{n-p} \frac{ (m-p)! (p-i)! (n-p)! (p-j)!}{i!j!}
\\&=& (m-p)!  (n-p)! \sum_{i+j=p} \binom{m-i}{m-p}\binom{n-j}{n-p}
\\&=& (m-p)!  (n-p)! \binom{m+n-p+1}{p},
\eas
and it follows that
\bas
\|\Join_{m+n-2p} \nv_m \otimes \nw_n\|^2&=&\|\nv_m\|^2 \|\nw_n\|^2\frac{p!^2}{m!n!}\sum_{i=0}^p \frac{ (m-i)!  (n-p+i)! }{i!(p-i)!}
\\&=&\frac{\binom{m+n-p+1}{p}}{\binom{m}{p}\binom{n}{p}}\|\nv_m\|^2 \|\nw_n\|^2.
\eas
\end{proof}
\begin{cor}
\bas
\|\Join_{m+n-2p}^{(k)} \nv_m \otimes \nw_n\|^2&=&
 \frac{\binom{m+n-2p}{k}\binom{m+n-p+1}{p}}{\binom{m}{p}\binom{n}{p}}\|\nv_m\otimes \nw_n\|^2.
\eas
\end{cor}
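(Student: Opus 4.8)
The plan is to recognize the transvectant \(u:=\Join_{m+n-2p}\,\nv_m\otimes\nw_n\) as a single highest-weight vector living in the tensor-product module \(V\otimes W\), and then to apply the induced-inner-product machinery to its \(\nN\)-orbit. By the definition of the transvectant we have \(u\in U_{m+n-2p}\cap\ker\nM\), so \(u\) lies in \(\ker\nM\) with \(\nH\)-eigenvalue \(M:=m+n-2p\). The symbol \(\Join^{(k)}_{m+n-2p}\,\nv_m\otimes\nw_n\) is, by the divided-power convention fixed earlier, exactly the orbit element \(u^{(k)}=\frac{1}{k!}\nN^k u\). Hence the whole computation reduces to evaluating \(\|u^{(k)}\|^2\) in terms of \(\|u\|^2\), after which I substitute the value of \(\|u\|^2\) supplied by the preceding Lemma.

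First I would establish that the induced-inner-product formula applies on \(V\otimes W\). The inner product on the tensor product is the product inner product, and the \(\nN\)-action is \(\nN\otimes 1+1\otimes\nN\); since the preceding Lemma gives \(\nN^\top=\nM\) on each of \(V\) and \(W\), taking transposes factor by factor yields \(\nN^\top=\nM\) on \(V\otimes W\) as well. By the Remark following that Lemma, this is precisely the hypothesis from which the defining relation of the induced inner product is derived, so applying it to the generator \(u\in\ker\nM\) of eigenvalue \(M\) gives
\bas
\|u^{(k)}\|^2=(u^{(k)},u^{(k)})_{V\otimes W}=\binom{M}{k}\|u\|^2.
\eas

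Should one prefer a self-contained derivation rather than appealing to the Remark, I would instead compute directly from \(\nN^\top=\nM\):
\bas
(u^{(k)},u^{(k)})=\frac{1}{(k!)^2}(\nN^k u,\nN^k u)=\frac{1}{(k!)^2}(u,\nM^k\nN^k u),
\eas
and then invoke the standard \(\Sl\) ladder relation \(\nM\,\nN^l u=l(M-l+1)\,\nN^{l-1}u\) for a highest-weight vector, which is proved by a one-line induction using \([\nM,\nN]=\nH\) together with \(\nH\,\nN^{l-1}u=(M-2l+2)\,\nN^{l-1}u\). Iterating it gives \(\nM^k\nN^k u=(k!)^2\binom{M}{k}u\), and hence again \(\|u^{(k)}\|^2=\binom{M}{k}\|u\|^2\). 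Either route, combined with the preceding Lemma's evaluation
\bas
\|u\|^2=\frac{\binom{m+n-p+1}{p}}{\binom{m}{p}\binom{n}{p}}\,\|\nv_m\otimes\nw_n\|^2,
\eas
produces the claimed formula after substituting \(M=m+n-2p\).

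The computation is essentially routine, so there is no serious obstacle; the one place that deserves care is the transfer of the identity \(\nN^\top=\nM\) from the factors \(V,W\) to the tensor product \(V\otimes W\), since the whole point of the statement is that \(u\) is treated as a highest-weight generator inside \(V\otimes W\) rather than inside \(V\) or \(W\) separately. Once that compatibility is in place, the binomial factor \(\binom{m+n-2p}{k}\) is forced by the induced-inner-product normalization of the \(\nN\)-orbit, and the remainder is pure substitution.
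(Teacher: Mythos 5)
Your proof is correct and matches the paper's (implicit) reasoning: the paper states the corollary without proof, but the factor $\binom{m+n-2p}{k}$ is obtained in exactly the way you describe, by treating $\Join_{m+n-2p}\nv_m\otimes\nw_n$ as an element of $\ker\Delta(\nM)$ with $\nH$-eigenvalue $m+n-2p$ and applying the orbit normalization $\|u^{(k)}\|^2=\binom{M}{k}\|u\|^2$ coming from $\Delta(\nN)^\top=\Delta(\nM)$, as the paper itself does in the subsequent orthogonality lemma. Your extra care in transferring $\nN^\top=\nM$ to the tensor product is a welcome explicit check of a step the paper takes for granted.
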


Although we have already computed \(\|\Join_{m+n-2p}^{(k)} \nv_m \otimes \nw_n\|^2\),
we have not computed \(\Join_{m+n-2p}^{(k)} \nv_m \otimes \nw_n\) itself.
We give here an explicit derivation, but mention that it also immediately follows from the symbolic method.
Let \(\Delta(X)=X\otimes 1 +1\otimes X\).
\bas
\Join_{m+n-2p}^{(k)} \nv_m \otimes \nw_n&=&\frac{1}{k!}\Delta^k(\nN) \Join_{m+n-2p} \nv_{m}\otimes \nw_{n}
\\&=&
\frac{1}{k!}\Delta^k(\nN) 
\sum_{i=0}^p (-1)^i \frac{\binom{p}{i}}{\binom{m}{i}\binom{n}{p-i}}  \nv_{m}^{(i)} \otimes \nw^{(p-i)}_{n}
\\&=&
\frac{1}{k!}
\sum_{r=0}^k \sum_{i=0}^p (-1)^i \frac{\binom{p}{i}}{\binom{m}{i}\binom{n}{p-i}} \binom{k}{r} \frac{(i+k-r)!}{i!}\frac{(p-i+r)!}{(p-i)!} v^{(i+k-r)}_{m} \otimes \nw^{(p-i+r)}_{n}
\\&=&
\sum_{r=0}^k \sum_{i=0}^p (-1)^i \frac{\binom{p}{i}}{\binom{m}{i}\binom{n}{p-i}} \binom{i+k-r}{k-r} \binom{p-i+r}{r} v^{(i+k-r)}_{m} \otimes \nw^{(p-i+r)}_{n}
\\&=&
\sum_{r=0}^k \sum_{i=k-r}^{k-r+p} (-1)^{i-k+r} \frac{\binom{p}{i-k+r}}{\binom{m}{i-k+r}\binom{n}{p+k-i-r}} \binom{i}{k-r} \binom{p+k-i}{r} \nv^{(i)}_{m} \otimes \nw^{(p+k-i)}_{n}
\\&=&
\sum_{i+j=k+p}\sum_{r+q=k} (-1)^{i-k+r} \frac{\binom{p}{i-q}\binom{i}{q} \binom{j}{r}}{\binom{m}{i-q}\binom{n}{j-r}}  \nv_{m}^{(i)} \otimes \nw_{n}^{(j)}.
\eas
In all formulas, the summation convention is such that for a binomial expression we must have that the numbers \(a\) and \( b\) in \(\binom{a}{b}\) are natural numbers (zero included)
with \(a\geq b\).
\begin{defn}
Define the Clebsch-Gordan Coefficient, also known as \(3j\)-symbol , by
\[\begin{pmatrix} m& n& m+n-2p\\i&j& k\end{pmatrix}=\sum_{r+q=k} (-1)^{i-k+r} \frac{\binom{p}{i-q}\binom{i}{q} \binom{j}{r}}{\binom{m}{i-q}\binom{n}{j-r}}, \quad i+j=k+p,\]
	with \(j_1=m, j_2=n\) and \(j_3=n+m-2p\) as the three \(j\)'s. 
\end{defn}
\begin{lem}
The Clebsch-Gordan Coefficients are orthogonal:
\bas
\lefteqn{\sum\begin{pmatrix} {m_1}& {n_1}& {m_1}+{n_1}-2p_1\\i_1&j_1& k_1\end{pmatrix}
\begin{pmatrix} {m_2}& {n_2}& {m_2}+{n_2}-2p_2\\i_2&j_2& k_2\end{pmatrix} \binom{{m_1}}{i_1} \binom {{n_1}}{j_1}}&&
\\&=&
\delta_{k_1,k_2}\delta_{m_1,m_2}\delta_{n_1,n_2} \binom{m_1+n_1-2p_1}{k_1}
 \frac{\binom{m_1+n_1-p_1+1}{p_1}}{\binom{m_1}{p_1}\binom{n_1}{p_1}}
\|\nv_{m_1}\otimes \nw_{n_1}\|^2
\eas
where the summation is over \(i_1+j_1=p_1+k_1=p_2+k_2=i_2+j_2.\)
\end{lem}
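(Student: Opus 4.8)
The plan is to read the left-hand sum as a Gram inner product and evaluate it two ways. By the Clebsch--Gordan expansion derived just above the Definition, namely
\[
\Join_{m+n-2p}^{(k)}\nv_m\otimes\nw_n=\sum_{i+j=k+p}\begin{pmatrix} m& n& m+n-2p\\i&j& k\end{pmatrix}\nv_m^{(i)}\otimes\nw_n^{(j)},
\]
I would expand both vectors \(\Join_{m_1+n_1-2p_1}^{(k_1)}\nv_{m_1}\otimes\nw_{n_1}\) and \(\Join_{m_2+n_2-2p_2}^{(k_2)}\nv_{m_2}\otimes\nw_{n_2}\) in the tensor basis and apply the induced inner product on \(V\otimes W\), which on basis tensors reads \((\nv_{m_1}^{(i_1)}\otimes\nw_{n_1}^{(j_1)},\nv_{m_2}^{(i_2)}\otimes\nw_{n_2}^{(j_2)})_{V\otimes W}=\binom{m_1}{i_1}\binom{n_1}{j_1}\delta_{i_1,i_2}\delta_{j_1,j_2}\delta_{m_1,m_2}\delta_{n_1,n_2}\|\nv_{m_1}\otimes\nw_{n_1}\|^2\). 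This collapses the double sum to precisely the single sum on the left of the statement (times the scalar \(\|\nv_{m_1}\otimes\nw_{n_1}\|^2\)), while the diagonal constraint \(i_1+j_1=i_2+j_2\) forces \(p_1+k_1=p_2+k_2\), which is exactly the stated summation condition; off this locus the sum is empty.

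Before the second evaluation I would record two structural facts. Because the inner product on \(V\otimes W\) is the tensor product of those on \(V\) and \(W\), the preceding Lemma (\(\nN^\top=\nM\), \(\nM^\top=\nN\)) upgrades to \(\Delta(\nN)^\top=\Delta(\nM)\) on \(V\otimes W\), where \(\Delta(X)=X\otimes1+1\otimes X\). I would also use the standard \(\Sl\) lowering identity \(\Delta(\nM)\Join_{A}^{(k)}=(A-k+1)\Join_{A}^{(k-1)}\), with \(A=m+n-2p\), which follows from \([\Delta(\nM),\Delta(\nN)]=\Delta(\nH)\) together with \(\Join_A\in\ker\Delta(\nM)\).

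The second evaluation is by orthogonality. Writing \(A_i=m_i+n_i-2p_i\) and assuming without loss of generality \(k_1\le k_2\), I transpose \(\Delta(\nN)^{k_1}\) across the form:
\[
\bigl(\Join_{A_1}^{(k_1)}\nv_{m_1}\otimes\nw_{n_1},\Join_{A_2}^{(k_2)}\nv_{m_2}\otimes\nw_{n_2}\bigr)_{V\otimes W}=\frac{1}{k_1!}\bigl(\Join_{A_1}\nv_{m_1}\otimes\nw_{n_1},\Delta(\nM)^{k_1}\Join_{A_2}^{(k_2)}\nv_{m_2}\otimes\nw_{n_2}\bigr)_{V\otimes W}.
\]
By the lowering identity \(\Delta(\nM)^{k_1}\Join_{A_2}^{(k_2)}\) is a constant multiple of \(\Join_{A_2}^{(k_2-k_1)}\). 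If \(k_1<k_2\) this is a genuine descendant, and pushing the remaining \(\Delta(\nN)^{k_2-k_1}\) back onto the left factor, which lies in \(\ker\Delta(\nM)\), annihilates the form; under \(p_1+k_1=p_2+k_2\) this is precisely the case \(p_1\neq p_2\), which produces the Kronecker \(\delta_{k_1,k_2}\). When \(k_1=k_2\) (equivalently \(p_1=p_2\), so \(A_1=A_2\)) the two vectors coincide and the inner product is \(\|\Join_{A_1}^{(k_1)}\nv_{m_1}\otimes\nw_{n_1}\|^2\), which is supplied by the Corollary, with the orthogonality of distinct pure transvectants needed here being the content of the preceding norm Lemma.

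Equating the two evaluations, the common scalar \(\|\nv_{m_1}\otimes\nw_{n_1}\|^2\) matches on both sides and the Lemma follows. The only genuine obstacle is the index bookkeeping: one must check that \(i_1+j_1=i_2+j_2\) together with \(m_1=m_2\), \(n_1=n_2\) converts the \(p\)-orthogonality into the stated \(\delta_{k_1,k_2}\), and that the transpose/lowering manipulation is used symmetrically in \((k_1,k_2)\). All analytic content is carried by the transpose Lemma, the Corollary, and the already-established orthogonality of the transvectants, so that apart from Proposition~\ref{prop:binom}, which is already built into the Corollary, no new combinatorial identity is required; the alternative of summing the product of \(3j\)-symbols directly is the grind this inner-product argument is designed to avoid.
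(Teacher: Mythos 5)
Your proposal is correct and follows essentially the same route as the paper: both read the left-hand sum as the inner product $(\Join_{m_1+n_1-2p_1}^{(k_1)}\nv_{m_1}\otimes\nw_{n_1},\Join_{m_2+n_2-2p_2}^{(k_2)}\nv_{m_2}\otimes\nw_{n_2})_{V\otimes W}$ and evaluate it a second time via the $\delta_{k_1,k_2}\binom{m_1+n_1-2p_1}{k_1}$ descendant factor together with the previously computed norm and orthogonality of the transvectants. The only difference is that you explicitly justify that descendant factor through the transpose lemma and the lowering identity, a step the paper asserts without comment.
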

\begin{proof}
We compute 
\bas
\lefteqn{(\Join_{m_1+n_1-2p_1}^{(k_1)} \nv_{m_1}\otimes  \nw_{n_1} , \Join_{m_2+n_2-2p_2}^{(k_2)} \nv_{m_2}\otimes  \nw_{n_2} )_{V\otimes W}}&&
\\&=&
\delta_{k_1,k_2} \binom{m_1+n_1-2p_1}{k_1}(\Join_{m_1+n_1-2p_1} \nv_{m_1}\otimes  \nw_{n_1} , \Join_{m_2+n_2-2p_2} \nv_{m_2}\otimes  \nw_{n_2} )_{\ker\Delta(\nM)|V\otimes W}
\\&=&
\delta_{k_1,k_2}\delta_{m_1,m_2}\delta_{n_1,n_2}  \binom{m_1+n_1-2p_1}{k_1}
 \frac{\binom{m_1+n_1-p_1+1}{p_1}}{\binom{m_1}{p_1}\binom{n_1}{p_1}}
\|\nv_{m_1}\otimes \nw_{n_1}\|^2.
\eas
\end{proof}
\begin{thm}[Rational Clebsch-Gordan Coefficients]\label{thm:RCGC}

The inversion formula to the definition of \(\Join_{m+n-2p}^{(k)}\nv_m\otimes \nw_n\) is
\bas
\nv^{(i)}_m\otimes \nw^{(j)}_n&=&\sum_{p+k=i+j}\begin{pmatrix} m&n&m+n-2p\\i&j&k\end{pmatrix}\frac{ \binom{m}{i}\binom{n}{j}\binom{m}{p}\binom{n}{p}}{\binom{m+n-2p}{k}\binom{m+n-p+1}{p}}
\Join_{m+n-2p}^{(k)}\nv_m\otimes \nw_n.
\eas
\end{thm}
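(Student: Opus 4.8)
The plan is to exploit the orthogonal structure that the preceding lemmas have already installed, so that the inversion becomes an ordinary expansion against an orthogonal basis. The key point is that the transvectants \(\Join_{m+n-2p}^{(k)}\nv_m\otimes\nw_n\), as \(p\) and \(k\) range over the admissible values, form an orthogonal basis of \(V_m\otimes W_n\): orthogonality is precisely the inner-product computation of the lemma preceding the statement, while completeness follows from the Clebsch--Gordan decomposition \(V_m\otimes W_n\cong\bigoplus_p U_{m+n-2p}\) together with a dimension count. Indeed, within a fixed \(\Delta(\nH)\)-eigenspace the admissible pairs \((p,k)\) are exactly as numerous as the pairs \((i,j)\) indexing the monomial basis \(\nv_m^{(i)}\otimes\nw_n^{(j)}\), so the two families have the same cardinality and the orthogonal transvectants span. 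Granting this, \(\nv_m^{(i)}\otimes\nw_n^{(j)}\) has a unique expansion whose coefficients are its Fourier coefficients against the transvectant basis.

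First I would write
\[
\nv_m^{(i)}\otimes\nw_n^{(j)}=\sum_{p,k} c_{p,k}\,\Join_{m+n-2p}^{(k)}\nv_m\otimes\nw_n,\qquad c_{p,k}=\frac{\bigl(\nv_m^{(i)}\otimes\nw_n^{(j)},\Join_{m+n-2p}^{(k)}\nv_m\otimes\nw_n\bigr)}{\|\Join_{m+n-2p}^{(k)}\nv_m\otimes\nw_n\|^2}.
\]
To evaluate the numerator I would substitute the forward formula, that is the definition of the Clebsch--Gordan coefficient obtained in the derivation above,
\[
\Join_{m+n-2p}^{(k)}\nv_m\otimes\nw_n=\sum_{i'+j'=k+p}\begin{pmatrix} m&n&m+n-2p\\i'&j'&k\end{pmatrix}\nv_m^{(i')}\otimes\nw_n^{(j')},
\]
and pair it against \(\nv_m^{(i)}\otimes\nw_n^{(j)}\) using the induced inner product, which on the tensor product reads \((\nv_m^{(i)}\otimes\nw_n^{(j)},\nv_m^{(i')}\otimes\nw_n^{(j')})=\binom{m}{i}\binom{n}{j}\delta_{i,i'}\delta_{j,j'}\|\nv_m\|^2\|\nw_n\|^2\). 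Only the term \((i',j')=(i,j)\) survives, and it occurs in the sum only when \(i+j=k+p\); this is exactly how the summation condition \(p+k=i+j\) of the statement arises, and it isolates a single symbol \(\left(\begin{smallmatrix} m&n&m+n-2p\\i&j&k\end{smallmatrix}\right)\) times \(\binom{m}{i}\binom{n}{j}\|\nv_m\|^2\|\nw_n\|^2\).

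Then I would divide by the norm squared supplied by the Corollary, namely \(\|\Join_{m+n-2p}^{(k)}\nv_m\otimes\nw_n\|^2=\bigl(\binom{m+n-2p}{k}\binom{m+n-p+1}{p}\big/\bigl(\binom{m}{p}\binom{n}{p}\bigr)\bigr)\|\nv_m\|^2\|\nw_n\|^2\). The factors \(\|\nv_m\|^2\|\nw_n\|^2\) cancel, leaving exactly the rational weight
\[
c_{p,k}=\begin{pmatrix} m&n&m+n-2p\\i&j&k\end{pmatrix}\frac{\binom{m}{i}\binom{n}{j}\binom{m}{p}\binom{n}{p}}{\binom{m+n-2p}{k}\binom{m+n-p+1}{p}}
\]
claimed in the theorem, and summing over the admissible \((p,k)\) with \(p+k=i+j\) yields the asserted inversion.

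The step I expect to be genuine content rather than bookkeeping is the completeness claim underpinning the orthogonal expansion: one must know that the transvectants exhaust \(V_m\otimes W_n\) and do not merely sit orthogonally inside it, since orthogonality alone gives only linear independence. The clean route is the dimension count above, but I would also record the equivalent purely formal alternative: substitute the candidate inverse back into the forward formula and collapse the resulting double sum using the orthogonality lemma for the Clebsch--Gordan coefficients proved just above. That lemma already packages the sum \(\sum_{i+j=p+k}\left(\begin{smallmatrix} m&n&m+n-2p\\i&j&k\end{smallmatrix}\right)\left(\begin{smallmatrix} m&n&m+n-2p'\\i&j&k'\end{smallmatrix}\right)\binom{m}{i}\binom{n}{j}\) into diagonal form, so this route reduces the theorem to the binomial identity of Proposition \ref{prop:binom} already in hand, bypassing any appeal to representation theory.
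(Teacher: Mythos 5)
Your proof is correct and is essentially the paper's own argument: the paper inverts the change-of-basis matrix $A_{[r]}$ by normalizing both families, transposing the resulting orthogonal matrix, and denormalizing, which is exactly your Fourier-coefficient computation $c_{p,k}=\bigl(\nv_m^{(i)}\otimes\nw_n^{(j)},\Join_{m+n-2p}^{(k)}\nv_m\otimes\nw_n\bigr)/\|\Join_{m+n-2p}^{(k)}\nv_m\otimes\nw_n\|^2$ against the orthogonal transvectant family, using the same two norm formulas. Your explicit treatment of completeness (the dimension count within a fixed $\Delta(\nH)$-eigenspace, or the back-substitution via the orthogonality of the Clebsch--Gordan coefficients) is a point the paper leaves implicit when it asserts that $A_{[r]}$ is invertible, but it does not change the route.
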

\begin{rem}\label{rem:pis0}
We compute the special case \(p=0\).
\[\begin{pmatrix} m& n& m+n\\i&j& i+j\end{pmatrix}=\sum_{r+q=i+j} (-1)^{-j+r} \frac{\binom{0}{i-q}\binom{i}{q} \binom{j}{r}}{\binom{m}{i-q}\binom{n}{j-r}}=1,\]
since \(i-q+j-r=0\), and therefore \(q=i\) and \(r=j\).
\end{rem}

\begin{rem}\label{rem:bright}
Another special case is \(i=0\). We find
\[\begin{pmatrix} m& n& m+n-2p\\0&j& k\end{pmatrix}= \frac{ \binom{j}{k}}{\binom{n}{p}},\]
and it follows that
\bas
\nv^{(0)}_m\otimes \nw^{(j)}_n&=&\sum_{p+k=j} \frac{ \binom{j}{k}\binom{n}{j}\binom{m}{p}}{\binom{m+n-2p}{k}\binom{m+n-p+1}{p}}
\Join_{m+n-2p}^{(k)}\nv_m\otimes \nw_n
\\&=&\sum_{p=0}^j \frac{ \binom{j}{p}\binom{n}{j}\binom{m}{p}}{\binom{m+n-2p}{j-p}\binom{m+n-p+1}{p}}
\Join_{m+n-2p}^{(j-p)}\nv_m\otimes \nw_n.
\eas
\end{rem}
\begin{rem}
This theorem, or variants of it, is well known in quantum mechanics, and in the context of quantum groups.
In the context in which we plan to use it, we have no previous examples of its use.
\end{rem}
\begin{proof}
We consider the equation
\[
A_{[r]}\mathbf{x}=\mathbf{y},
\]
where 
\[
\mathbf{x}=\begin{pmatrix}
\nv_{m}\otimes \nw_{n}^{(r)}\\
\nv^{(1)}_m\otimes \nw^{(r-1)}_{n}\\
\vdots\\
\nv_{m}^{(r-1)}\otimes \nw_{n}^{(1)}\\
\nv_{m}^{(r)}\otimes \nw_n\\
\end{pmatrix},\,\,
\mathbf{y}=\begin{pmatrix} 
\Join_{m+n}^{(r)} \nv_m\otimes \nw_n \\
\Join_{m+n-2}^{(r-1)} \nv_m\otimes \nw_n \\
\vdots
\\
\Join_{m+n-2r+2}^{(1)} \nv_m\otimes \nw_n \\
\Join_{m+n-2r} \nv_m\otimes \nw_n 
\end{pmatrix}.
\]
We first make this into an orthonormal set (orthogonality is clear from the definition of the inner product) by dividing each  term in \(x\) and \(y\) by its norm.
Thus the \(i+1\)st column of \(A_{[r]}\) get multiplied by the \(\|\nv^{(i)}_m\otimes \nw^{(r-i)}_{n}\|\)
and the \(p+1\)st row divided by \(\|\Join_{m+n-2p}^{(r-p)} \nv_m\otimes \nw_n\|\).
The resulting matrix is then transposed and we go back to the original bases by doing the same thing again (this is because
the roles of \(x\) and \(y\) are now switched, and therefore multiplication is switched with division).
We recall that \[\|\nv^{(i)}_m\otimes \nw^{(j)}_{n}\|^2=\binom{m}{i}\binom{n}{j}\|\nv_m\otimes \nw_n\|^2,\] and \[\|\Join_{m+n-2p}^{(k)} \nv_m\otimes \nw_n\|^2= \frac{\binom{m+n-2p}{k}\binom{m+n-p+1}{p}}{\binom{m}{p}\binom{n}{p}}
\|\nv_m\otimes \nw_n\|^2,
\] with \(i+j=p+k=r\).
\end{proof}
\begin{rem}
Nice as this inversion formula for the Clebsch-Gordan coefficients may look, the reader should realize that the right-hand side contains expressions that are computable,
but may not always have a nice general formula. On the bright side, we notice that in the normal form computations we often need special cases with very simple \(3j\)-symbols, cf. Remark \ref{rem:bright}.
In physics problems, it would seem the finite-dimensional results are enough,
but in the computation of the structure constants of the Lie algebra of polynomial vector fields with a basis consisting of \(\nN\)-orbits, we need general formulas.
\end{rem}
\begin{rem}\label{rem:contraction}
	Although the transvectants are formulated as a map from the tensor product \(V_m\otimes V_n\) to itself, in the actual application one often considers contractions. For instance, if \(V_m \) and \(V_n\) have polynomial elements, one can take the product of two polynomials,
	defining a contraction \(\pi_\circ : V\otimes V \rightarrow V\).
	If the transvectant is in \(\ker\Delta(\nM)\), then the transvectant after this multiplication has
	been carried out is in \(
\ker\nM\), since \(\nM fg=\nM \pi_\circ f\otimes g=\pi_\circ\Delta(\nM)f\otimes g\).
	Similar reasoning can be applied when \(\pi_\wedge X\otimes Y=[X,Y]\), due to the Jacobi identity.
\end{rem}

\section{Product formula for the 3D-family}
Let
\bas
\nN &:=&{x}\frac{\D}{\D {y}}+2 {y} \frac{\D}{\D {z}}, \quad{\nH}=2 {z} \frac{\D}{\D {z}} - 2{x} \frac{\D}{\D {x}},\qquad \nM:= {z} \frac{\D}{\D {y}}+ 2 {y} \frac{\D}{\D {x}},
\quad \E= 
{x} \frac{\D}{\D {x}}
+{y} \frac{\D}{\D {y}} 
+{z} \frac{\D}{\D {z}},
\eas
where these operators obey the relations \eqref{eq:sl2}.

\begin{defn}
Let \(\zz\) be the linear form in \(x,y,z\) that is in \(\ker\nM\) and has \(\nH\)-eigenvalue \(2\),
that is \(\zz=z\) with our choice of operators. 
Let \(\del\) be the quadratic form in \(\ker\nM \cap \ker\nH\), that is \(\del=\pi_\circ\Join_{0} \zz\otimes \zz\), which comes down to  \(\del=xz-y^2\) with our choice of operators. 
\end{defn}
\begin{thm}\label{thm:2DkerM}
The kernel of \(\nM\), acting on \(\mathbb{R}[x,y,z]\), is \(\mathbb{R}[\zz,\del]\).
\end{thm}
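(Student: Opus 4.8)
The plan is to prove the two inclusions $\mathbb{R}[\zz,\del]\subseteq\ker\nM$ and $\ker\nM\subseteq\mathbb{R}[\zz,\del]$ separately, handling the first by the derivation property and the second by a graded dimension count. Since $\nM=z\,\D/\D y+2y\,\D/\D x$ is a first-order operator, it is a derivation of $\mathbb{R}[x,y,z]$, so $\ker\nM$ is a subalgebra containing the constants. Moreover $\nM$ is degree-preserving, so $\ker\nM$ is graded and we may argue one degree at a time. For the easy inclusion it then suffices to check the two generators: $\nM\zz=\nM z=0$, and $\nM\del=\nM(xz-y^2)=z(-2y)+2y\,z=0$. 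Hence $\mathbb{R}[\zz,\del]\subseteq\ker\nM$.

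For the reverse inclusion I would compare Hilbert series. First, $\zz$ and $\del$ are algebraically independent: their Jacobian with respect to $x,z$ is $\det\left(\begin{smallmatrix}0&1\\ z&x\end{smallmatrix}\right)=-z\neq0$. Thus $\mathbb{R}[\zz,\del]$ is a free polynomial algebra on a generator of degree $1$ and one of degree $2$; a monomial $\zz^{s}\del^{t}$ has degree $s+2t$, so $\dim\mathbb{R}[\zz,\del]_d=\lfloor d/2\rfloor+1$, the number of pairs $(s,t)$ with $s+2t=d$.

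It remains to show $\dim(\ker\nM)_d=\lfloor d/2\rfloor+1$. The degree-one part $\langle x,y,z\rangle$ is the irreducible $\Sl$-module $V_2$, with $\nH$-eigenvalues $-2,0,2$ on $x,y,z$, so the degree-$d$ part is $S^dV_2$ and all its $\nH$-weights are even. By complete reducibility each $S^dV_2$ is a finite sum of irreducibles $V_m$, and the raising operator $\nM$ kills exactly the highest-weight vector of each summand; hence, writing $P_d(w)$ for the dimension of the $\nH$-weight-$w$ space in degree $d$, the multiplicity of $V_w$ is $a_w=P_d(w)-P_d(w+2)$ and $\dim(\ker\nM)_d=\sum_{w\ge0}a_w$. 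Summing over even $w\ge0$ telescopes this to $P_d(0)$, the zero-weight multiplicity. That multiplicity counts monomials $x^ay^bz^c$ of degree $d$ with $\nH$-weight $2(c-a)=0$, i.e.\ $a=c$ and $2a+b=d$, of which there are exactly $\lfloor d/2\rfloor+1$. The two graded dimensions coincide, so the inclusion of the first step is an equality in every degree, giving $\ker\nM=\mathbb{R}[\zz,\del]$.

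The hard part is the computation of $\dim(\ker\nM)_d$: one must invoke complete reducibility of the graded pieces and correctly identify the kernel dimension with the zero-weight multiplicity through the telescoping step, rather than merely count generators. A purely computational alternative would solve the linear equation $\nM f=0$ directly by induction on the $x$-degree of $f$, but this invites tedious case analysis, whereas the $\Sl$-theoretic count is clean and conceptually transparent.
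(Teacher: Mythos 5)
Your proof is correct. Note, however, that the paper does not actually prove this statement in the text: it simply cites \cite[Section 12.6.2]{SVM2007}, where the result appears as part of the Stanley decomposition of $\ker\sad_\nM$ for the irreducible $3$D nilpotent (equivalently, the classical description of the covariants of a binary quadratic as generated by the ground form and its discriminant). Your argument is therefore a genuinely self-contained alternative, and it is the standard clean one: the derivation property plus the checks $\nM\zz=0$, $\nM\del=0$ give one inclusion, algebraic independence via the Jacobian gives the Hilbert series $\lfloor d/2\rfloor+1$ of $\mathbb{R}[\zz,\del]$, and the $\Sl$-weight count — identifying $\dim(\ker\nM)_d$ with the number of irreducible summands of $S^dV_2$ and telescoping the multiplicities $a_w=P_d(w)-P_d(w+2)$ down to the zero-weight multiplicity $P_d(0)$ — matches it. All the individual steps check out ($\nM(xz-y^2)=-2yz+2yz=0$; the weights of $x,y,z$ under $\nH=2z\,\D/\D z-2x\,\D/\D x$ are $-2,0,2$ so only even weights occur in $S^dV_2$; $P_d(0)$ counts monomials $x^ay^bz^c$ with $a=c$, $2a+b=d$). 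What your route buys is independence from the cited reference and from Stanley-basis machinery; what the citation buys the authors is brevity and consistency with the framework they need anyway for the higher-dimensional cases, where the kernel is no longer a free polynomial algebra and a Hilbert-series comparison against a guessed free generating set would not suffice.
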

\begin{proof}
See \cite[Section 12.6.2]{SVM2007}.
\end{proof}
In this section, we compute the \(p\)th transvectant of \(\zz^{\mu_1}\) and \(\zz^{\mu_2}\), that is \(\Join_{2{\mu_1}+2{\mu_2}-2p} \zz^{\mu_1}\otimes \zz^{\mu_2}\),
where \(\mu_1,\mu_2\in\mathbb{N}\) .
\begin{defn}\label{def:lambda}
We define, with \(m_i=2\mu_i, i=1,2\), \(p=2\rho\), and \(l_1, l_2\in\mathbb{N}_0\),
\ba\label{eq:lambda}
\nonumber
\lambda^{l_1,\mu_1}_{l_2,\mu_2}(\rho)&=&\begin{pmatrix} {m_1}&{m_2}&{m_1}+{m_2}-2p\\{l_1}&{l_2}&l_1+l_2-p\end{pmatrix}\frac{ \binom{{m_1}}{{l_1}}\binom{{m_2}}{{l_2}}}{\binom{{m_1}+{m_2}-2p}{l_1+l_2-p}\binom{{m_1}+{m_2}-p+1}{p}\binom{p}{\rho}}
\\&\times&{2}^{p}{\mu_1+\mu_2-\rho\choose \rho}{\mu_1\choose \rho}{\mu_2\choose \rho}\frac{(l_1)!(l_2)!}{(l_1+l_2-p)!}.
\ea
\end{defn}
\begin{rem}\label{eq:=lambdal0}
	Following Remark \ref{rem:bright} we have the following simplified formula of  \eqref{eq:lambda} for \(l_1=0,\)
\bas
\lambda^{0,\mu_1}_{l_2,\mu_2}(\rho)&=&
 {2}^{p} p!\frac{  {\mu_1\choose \rho}{\mu_2\choose \rho}{\binom{l_2}{p}}\binom{m_2-p}{m_2-l_2}{\mu_1+\mu_2-\rho\choose \rho}}{\binom{p}{\rho}\binom{{m_1}+{m_2}-2p}{l_2-p}\binom{{m_1}+{m_2}-p+1}{p}}.
\eas
\end{rem}
\begin{thm}
	The following formula holds for the product:
\ba\label{eq:nn}
\nN^{l_1} \zz^{\mu_1} \cdot \nN^{l_2} \zz^{\mu_2}&=& \sum_{p+k={l_1}+{l_2}} \lambda^{l_1,\mu_1}_{l_2,\mu_2}(\rho) \nN^k \zz^{{\mu_1}+{\mu_2}-p}\del^{\rho}.
\ea
\end{thm}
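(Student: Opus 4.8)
The plan is to reduce the product to a contraction of a tensor of highest-weight vectors, invert the Clebsch--Gordan decomposition by Theorem~\ref{thm:RCGC}, and then compute a single scalar. Since \(\nM\zz=0\) and \(\nM\) is a derivation, \(\zz^{\mu_i}\in\ker\nM\) with \(\nH\)-eigenvalue \(m_i=2\mu_i\), so each \(\zz^{\mu_i}\) is a highest-weight generator and \(\nN^{l_i}\zz^{\mu_i}=l_i!\,(\zz^{\mu_i})^{(l_i)}\). Writing the polynomial product as the contraction \(\pi_\circ\) of Remark~\ref{rem:contraction}, one has
\[
\nN^{l_1}\zz^{\mu_1}\cdot\nN^{l_2}\zz^{\mu_2}=l_1!\,l_2!\,\pi_\circ\big((\zz^{\mu_1})^{(l_1)}\otimes(\zz^{\mu_2})^{(l_2)}\big).
\]
I would then apply the inversion formula of Theorem~\ref{thm:RCGC} with \(m=m_1,\ n=m_2,\ i=l_1,\ j=l_2\), and push \(\pi_\circ\) through. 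Because \(\nN\) is a derivation we have \(\pi_\circ\Delta(\nN)=\nN\pi_\circ\) exactly as in Remark~\ref{rem:contraction}, whence \(\pi_\circ\Join^{(k)}_{m_1+m_2-2p}=\tfrac1{k!}\,\nN^k\pi_\circ\Join_{m_1+m_2-2p}\). Thus everything collapses to evaluating the contracted zeroth transvectant.

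Next I would pin down the form of that contracted transvectant. The image \(\pi_\circ\Join_{m_1+m_2-2p}\zz^{\mu_1}\otimes\zz^{\mu_2}\) lies in \(\ker\nM\), is homogeneous of polynomial degree \(\mu_1+\mu_2\), and carries \(\nH\)-eigenvalue \(m_1+m_2-2p=2(\mu_1+\mu_2)-2p\). By Theorem~\ref{thm:2DkerM}, \(\ker\nM=\mathbb{R}[\zz,\del]\); a monomial \(\zz^a\del^b\) has degree \(a+2b\) and \(\nH\)-eigenvalue \(2a\), so the two constraints force \(a=\mu_1+\mu_2-p\) and \(b=p/2\). Hence the relevant bihomogeneous component is at most one-dimensional, it vanishes unless \(p\) is even, and for \(p=2\rho\) we must have
\[
\pi_\circ\Join_{m_1+m_2-2p}\zz^{\mu_1}\otimes\zz^{\mu_2}=c_\rho\,\zz^{\mu_1+\mu_2-p}\del^{\rho}
\]
for a scalar \(c_\rho\). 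This both explains the appearance of \(p=2\rho\) in Definition~\ref{def:lambda} and isolates the one remaining unknown.

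To compute \(c_\rho\) I would restrict to \(y=0\): there \(\del=xz\), so \(\zz^{\mu_1+\mu_2-p}\del^{\rho}|_{y=0}=x^\rho z^{\mu_1+\mu_2-\rho}\), and \(c_\rho\) is the coefficient of \(x^\rho z^{\mu_1+\mu_2-\rho}\). Since \(\nN\) is a derivation, its flow gives \(e^{t\nN}\zz^\mu=(z+2ty+t^2x)^\mu\); setting \(y=0\) and matching powers of \(t\) yields \(\nN^{2\alpha}\zz^\mu|_{y=0}=(2\alpha)!\binom{\mu}{\alpha}x^\alpha z^{\mu-\alpha}\), while the odd powers vanish at \(y=0\). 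Substituting the explicit transvectant from the Definition (the signs \((-1)^i=(-1)^{2\alpha}\) drop out) and keeping only the even indices \(i=2\alpha,\ j=2\beta\) produces
\[
c_\rho=\sum_{\alpha+\beta=\rho}\frac{\binom{2\rho}{2\alpha}\binom{\mu_1}{\alpha}\binom{\mu_2}{\beta}}{\binom{2\mu_1}{2\alpha}\binom{2\mu_2}{2\beta}}.
\]

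It then remains to evaluate this sum and collect constants. I claim
\[
c_\rho=\frac{2^{2\rho}\binom{\mu_1+\mu_2-\rho}{\rho}\binom{\mu_1}{\rho}\binom{\mu_2}{\rho}}{\binom{2\rho}{\rho}\binom{2\mu_1}{2\rho}\binom{2\mu_2}{2\rho}},
\]
after which multiplying by \(l_1!\,l_2!/k!\) and by the inversion coefficient of Theorem~\ref{thm:RCGC} reproduces \(\lambda^{l_1,\mu_1}_{l_2,\mu_2}(\rho)\) with \(k=l_1+l_2-p\): the factors \(\binom{m_1}{p}\binom{m_2}{p}\) of the inversion coefficient cancel against the denominator \(\binom{2\mu_1}{2\rho}\binom{2\mu_2}{2\rho}\) of \(c_\rho\), while \(2^{2\rho}=2^{p}\) and \(\binom{2\rho}{\rho}=\binom{p}{\rho}\), giving exactly \eqref{eq:nn}. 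The main obstacle is precisely this terminal evaluation of \(c_\rho\): rewritten with factorials it is a convolution of half-integer (double-factorial) Pochhammer symbols, a Vandermonde/Chu type identity in the spirit of Proposition~\ref{prop:binom}. I would prove it by converting the sum to a terminating \({}_2F_1\) at argument \(1\) and applying Vandermonde's theorem, and I expect the careful bookkeeping of the half-integer parameters to be the only delicate point.
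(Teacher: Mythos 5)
Your proposal is correct, and its skeleton is the same as the paper's: contract with \(\pi_\circ\), identify \(\pi_\circ\Join_{m_1+m_2-2p}\zz^{\mu_1}\otimes\zz^{\mu_2}\) as a scalar multiple of \(\zz^{\mu_1+\mu_2-p}\del^{\rho}\) by matching polynomial degree and \(\nH\)-eigenvalue against Theorem \ref{thm:2DkerM} (forcing \(p=2\rho\)), and then invert via Theorem \ref{thm:RCGC}; your bookkeeping of the factors \(l_1!\,l_2!/k!\) and the cancellation of \(\binom{m_1}{p}\binom{m_2}{p}\) against the denominator of \(c_\rho\) reproduces \(\lambda^{l_1,\mu_1}_{l_2,\mu_2}(\rho)\) exactly. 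Where you genuinely diverge is in the evaluation of the scalar: the paper restricts to \(x=0\), where \(\del\equiv -y^2\) and \(\nN\) may be replaced by \(\nN_{app}=2y\frac{\partial}{\partial z}\), which yields an \emph{alternating} sum over all \(i+j=p\) whose closed form the paper then simply asserts; you restrict to \(y=0\), where \(\del=xz\), and use \(e^{t\nN}\zz^{\mu}=(z+2ty+t^2x)^{\mu}\) so that only even indices \(i=2\alpha\), \(j=2\beta\) survive, giving a positive-term sum. Your terminal identity does close as you predict: the term ratio of your sum is \((\alpha-\rho)(\alpha+\mu_2-\rho+\tfrac12)\big/\bigl((\alpha+1)(\alpha-\mu_1+\tfrac12)\bigr)\), so \(c_\rho=\tfrac{\binom{\mu_2}{\rho}}{\binom{2\mu_2}{2\rho}}\,{}_2F_1\bigl(-\rho,\ \mu_2-\rho+\tfrac12;\ \tfrac12-\mu_1;\ 1\bigr)\), and Chu--Vandermonde gives \((\rho-\mu_1-\mu_2)_\rho/(\tfrac12-\mu_1)_\rho\), which after the duplication-formula bookkeeping is precisely your claimed \(c_\rho=2^{2\rho}\binom{\mu_1+\mu_2-\rho}{\rho}\binom{\mu_1}{\rho}\binom{\mu_2}{\rho}\big/\bigl(\binom{2\rho}{\rho}\binom{2\mu_1}{2\rho}\binom{2\mu_2}{2\rho}\bigr)\) and agrees with the paper's constant \(2^{p}\binom{\mu_1+\mu_2-\rho}{\rho}\binom{\mu_1}{\rho}\binom{\mu_2}{\rho}\big/\bigl(\binom{m_1}{p}\binom{m_2}{p}\binom{p}{\rho}\bigr)\). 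What your route buys is a manifestly positive, Vandermonde-summable sum (the paper's own final step is no more rigorously justified than yours); what the paper's buys is a marginally shorter computation. No gap.
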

\begin{proof}
We know from Remark \ref{rem:contraction} that \(\pi_\circ\Join_{2{\mu_1}+2{\mu_2}-2p} \zz^{\mu_1}\otimes \zz^{\mu_2}\) is in the kernel of \(\nM\) after contraction of the tensor product (since a tensor stays in the kernel if its arguments are switched and thus the symmetrized tensor is also in the kernel) and is therefore, a sum of monomials \(\zz^Q \del^R\) as follows from Theorem
\ref{thm:2DkerM}.
Let \(m_i=2\mu_i, i=1,2\) be the \({{\mathsf H}}\)-eigenvalues of \(\zz^{\mu_i}\).

Comparing the degrees we see that \({\mu_1}+{\mu_2}=Q+2R\), while comparing the \({{\mathsf H}}\)-eigenvalues we see that \({m_1}+{m_2}-2p=2Q\).
Thus we have \(Q={\mu_1}+{\mu_2}-p\) and \(2R=p\). This implies that \(p\) is even and we put \(p=2\rho\). Then \(\rho=R\) and we know that the transvectant is 
\( c \zz^{{\mu_1}+{\mu_2}-2\rho} \del^\rho\) if \(p\) is even and zero otherwise.
If we evaluate \( c \zz^{{\mu_1}+{\mu_2}-2\rho} \del^\rho\) for \(x=0\), we find \(c \zz^{{\mu_1}+{\mu_2}-2\rho} \del^p\equiv c (-1)^\rho \zz^{{\mu_1}+{\mu_2}-2\rho} y^{2\rho}\).
Evaluating at \(x=0\) means that we can compute with \(\nN_{app}=2y\frac{\partial}{\partial z}\) since the contribution of the \(x\frac{\partial}{\partial y}\)-term
ends up as zero anyway.

 Let \(\nv_{{m_1}}=\zz^{\mu_1}\) and \(\nw_{{m_2}}=\zz^{\mu_2}\). 
Recall the definition of the transvectant:
\bas
\Join_{m+n-2p}\nv_m \otimes \nw_{n}&=&
\sum_{i=0}^p (-1)^i \frac{\binom{p}{i}}{\binom{m}{i}\binom{n}{p-i}}  \nv_{m}^{(i)} \otimes \nw_{n}^{(p-i)}.
\eas
Then,
\bas\label{T2}
                                \pi_\circ\Join_{{m_1}+{m_2}-2p} \zz^{\mu_1}\otimes \zz^{\mu_2}&=&
                                \pi_\circ\sum_{i=0}^{p} (-1)^i \frac{\binom{p}{i}}{\binom{{m_1}}{i}\binom{{m_2}}{p-i}}  \nv_{m_1}^{({i})} \otimes \nw_{m_2}^{({p-i})}
\\&=&
                                \pi_\circ\sum_{i=0}^{p} \frac{\binom{p}{i}}{\binom{{m_1}}{i}\binom{{m_2}}{p-i}}\frac{(-1)^i}{i!(p-i)!}  \nN^i \zz^{{\mu_1}} \otimes \nN^{p-i}\zz^{{\mu_2}}
\\&\equiv&
                                \pi_\circ\sum_{i=0}^{p} \frac{\binom{p}{i}}{\binom{{m_1}}{i}\binom{{m_2}}{p-i}}\frac{(-1)^i}{i!(p-i)!}  \nN_{app}^i \zz^{{\mu_1}} \otimes \nN_{app}^{p-i}\zz^{{\mu_2}}
\\&=&
                                \sum_{i=0}^{p} 2^{p}\frac{\binom{p}{i}}{\binom{{m_1}}{i}\binom{{m_2}}{p-i}}\frac{(-1)^i}{i!(p-i)!}  \frac{{\mu_1}!}{({\mu_1}-i)!} \frac{{\mu_2}!}{({\mu_2}-p+i)!}y^{p} \zz^{{\mu_2}+{\mu_1}-p} 
\\&\equiv&
{2}^{\,p}\frac {{{\mu_1}+{\mu_2}-\rho\choose \rho}{{\mu_1}\choose \rho}{{\mu_2}\choose \rho}}{{{m_1}\choose p}{{m_2}\choose p}{p \choose \rho}}{\zz}^{{\mu_1}+{\mu_2}-p}{\del}^{\rho}.
\eas

We recall the inversion formula from Theorem \ref{thm:RCGC}:
\bas
\nv^{(i)}_m\otimes \nw^{(j)}_n&=&\sum_{p+k=i+j}\begin{pmatrix} m&n&m+n-2p\\i&j&k\end{pmatrix}\frac{ \binom{m}{i}\binom{n}{j}\binom{m}{p}\binom{n}{p}}{\binom{m+n-2p}{k}\binom{m+n-p+1}{p}}\Join_{m+n-2p}^{(k)}\nv_m\otimes \nw_n.
\eas
This gives us a formula for \(\nN^{l_1} \zz^{\mu_1} \cdot\nN^{l_2} \zz^{\mu_2}\), with \(\nv_{m_i}=\zz^{\mu_i}\) for \(i=1,2\):
\bas
&&\frac{1}{{l_1}!{l_2}!} \nN^{l_1} \zz^{\mu_1} \cdot \nN^{l_2} \zz^{\mu_2}
\\&=&\pi_\circ\sum_{p+k={l_1}+{l_2}}\begin{pmatrix} {m_1}&{m_2}&{m_1}+{m_2}-2p\\{l_1}&{l_2}&k\end{pmatrix}\frac{ \binom{{m_1}}{{l_1}}\binom{{m_2}}{{l_2}}\binom{{m_1}}{p}\binom{{m_2}}{p}}{\binom{{m_1}+{m_2}-2p}{k}\binom{{m_1}+{m_2}-p+1}{p}}\Join_{{m_1}+{m_2}-2p}^{(k)}\nv_{{m_1}}\otimes \nv_{{m_2}}
\\&=&\sum_{p+k={l_1}+{l_2}}\begin{pmatrix} {m_1}&{m_2}&{m_1}+{m_2}-2p\\{l_1}&{l_2}&k\end{pmatrix}
\frac{ \binom{{m_1}}{{l_1}}\binom{{m_2}}{{l_2}}}{\binom{{m_1}+{m_2}-2p}{k}\binom{{m_1}+{m_2}-p+1}{2\rho}\binom{p}{\rho}}
\\&\times&{2}^{p}{\mu_1+\mu_2-\rho\choose \rho}{\mu_1\choose \rho}{\mu_2\choose \rho}\frac{1}{k!} \nN^k \zz^{{\mu_1}+{\mu_2}-p}\del^{\rho}.
\eas
Using Definition \ref{def:lambda} we have
%
\ba\label{eq:nn}
\nN^{l_1} \zz^{\mu_1} \cdot \nN^{l_2} \zz^{\mu_2}&=& \sum_{p+k={l_1}+{l_2}} \lambda^{l_1,\mu_1}_{l_2,\mu_2}(\rho) \nN^k \zz^{{\mu_1}+{\mu_2}-p}\del^{\rho},
\ea
and this is the desired result.
\end{proof}
\begin{rem}\label{rem:lambda0}
	This reduces, using Remark \ref{rem:pis0}, for \(\rho=0\) to
	\bas
\lambda^{l_1,\mu_1}_{l_2,\mu_2}(0)&=&\begin{pmatrix} {m_1}&{m_2}&{m_1}+{m_2}\\{l_1}&{l_2}&l_1+l_2\end{pmatrix}\frac{ \binom{{m_1}}{{l_1}}\binom{{m_2}}{{l_2}}}{\binom{{m_1}+{m_2}}{l_1+l_2}\binom{l_1+l_2}{l_1}}
=
\frac{ \binom{{m_1}}{{l_1}}\binom{{m_2}}{{l_2}}}{\binom{{m_1}+{m_2}}{l_1+	l_2}\binom{l_1+l_2}{l_1}}
=\frac{\binom{m_1+m_2-l_1-l_2}{m_1-l_1}}{\binom{m_1+m_2}{m_1}}.
\eas
\end{rem}
\section{Structure constants for the \(\mathscr{A}_2\)-family}\label{sec:strucc3}
	\begin{defn}
\def\nyA{\mathsf{A}}
The \(3\)D Euler family \(\mathscr{A}_2\)  is defined by the following (affine) vector spaces
	\ba\label{CFamily}
	\mathscr{A}_2= \langle {\nyA}^{l}_{\mu,k}\rangle_{ 0\leqslant l \leqslant 2\mu, \mu, k\in\mathbb{N}_0},\quad
	\widehat{\mathscr{A}}_2= \nN+\mathscr{A}_2,
	\ea 
where \(\nA_{\mu,k}^l=\sad_\nN^l \zz^\mu \del^k \E= (\nN^l \zz^\mu)\del^k \E\).
	We observe that \(\nH \nA_{\mu,k}^l = (m-2l) \nA_{\mu,k}^l\), with \(m=2\mu\).
\end{defn}
The aim of this part, is to derive explicit formulas
for the structure constants of the \(\mathscr{A}_2\)-family.
\begin{thm}\label{thm:FSTC3}
	The following holds, with \(p=2\rho\) :
	\ba\label{eq:3d}
	{[}\nA_{\mu_1,k_1}^{l_1},\nA_{\mu_2,k_2}^{l_2}{]}&=&
	 (\mu_2+2k_2-\mu_1-2k_1) \sum_{p+k={l_1}+{l_2}}  \lambda^{l_1,\mu_1}_{l_2,\mu_2}(\rho) \nA^k_
	{{\mu_1}+{\mu_2}-p,{k_1+k_2+\rho}}. 
	\ea
\end{thm}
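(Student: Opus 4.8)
The plan is to exploit the structural fact that every basis element is a scalar function times the fixed Euler field: writing $f=(\nN^{l_1}\zz^{\mu_1})\del^{k_1}$ and $g=(\nN^{l_2}\zz^{\mu_2})\del^{k_2}$, we have $\nA_{\mu_1,k_1}^{l_1}=f\E$ and $\nA_{\mu_2,k_2}^{l_2}=g\E$. (Here I use that $\nN\del=0$, so $\sad_\nN^l(\zz^\mu\del^k\E)=(\nN^l\zz^\mu)\del^k\E$, as recorded in the definition of the $\mathscr A_2$-family.) Because both factors carry the \emph{same} vector field $\E$, the Lie bracket should collapse to a single scalar multiple of a product of functions times $\E$.

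First I would apply the Leibniz rule for Lie brackets of vector fields in the form $[f\E,g\E]=fg[\E,\E]+f(\E g)\E-g(\E f)\E$. Since $[\E,\E]=0$, this reduces to $[f\E,g\E]=(f\,\E g-g\,\E f)\,\E$. Next I would evaluate $\E f$ and $\E g$ via Euler's identity $\E h=(\deg h)\,h$ for homogeneous $h$. Because $\nN$ is a linear, hence degree-preserving, vector field, $\nN^{l_i}\zz^{\mu_i}$ is homogeneous of degree $\mu_i$, while $\del^{k_i}$ has degree $2k_i$; thus $f$ has degree $\mu_1+2k_1$ and $g$ has degree $\mu_2+2k_2$. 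Substituting gives $[f\E,g\E]=(\mu_2+2k_2-\mu_1-2k_1)\,fg\,\E$, which already produces the scalar prefactor in the statement.

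Finally I would expand $fg=(\nN^{l_1}\zz^{\mu_1})(\nN^{l_2}\zz^{\mu_2})\,\del^{k_1+k_2}$ and invoke the product formula \eqref{eq:nn} on the factor $\nN^{l_1}\zz^{\mu_1}\cdot\nN^{l_2}\zz^{\mu_2}$, rewriting it as $\sum_{p+k=l_1+l_2}\lambda^{l_1,\mu_1}_{l_2,\mu_2}(\rho)\,\nN^k\zz^{\mu_1+\mu_2-p}\del^{\rho}$ (with $p=2\rho$, odd $p$ contributing zero). Multiplying back the remaining $\del^{k_1+k_2}$ and the trailing $\E$, and recognizing that $(\nN^k\zz^{\mu_1+\mu_2-p})\del^{k_1+k_2+\rho}\E=\nA^k_{\mu_1+\mu_2-p,\,k_1+k_2+\rho}$, yields exactly the claimed identity.

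Since all the genuine combinatorial content has already been absorbed into the product formula \eqref{eq:nn}, there is essentially no obstacle in this argument; it is a clean corollary. The only points demanding care are the vanishing $[\E,\E]=0$ together with $[\E,\nN]=0$, the homogeneity bookkeeping that fixes the scalar $(\mu_2+2k_2-\mu_1-2k_1)$, and the correct matching of the $\del$-exponent as $k_1+k_2+\rho$ after the $\del$-powers are factored through the product.
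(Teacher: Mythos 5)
Your proposal is correct and follows essentially the same route as the paper: both reduce the bracket to the identity $[f\E,g\E]=(\delta_0(g)-\delta_0(f))fg\,\E$ (your Leibniz-plus-Euler derivation is exactly the paper's one-line computation made explicit) and then substitute the product formula \eqref{eq:nn} for $\nN^{l_1}\zz^{\mu_1}\cdot\nN^{l_2}\zz^{\mu_2}$, carrying the $\del^{k_1+k_2}$ factor along to land on $\nA^k_{\mu_1+\mu_2-p,\,k_1+k_2+\rho}$. No gaps.
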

\begin{proof}
		The structure constants for \(\mathscr{A}_2\) of triple-zero singularities  are  (on a basis of homogeneous vector fields)
		\bas \label{e}
	[f{\E},g{\E}]=f({\E}g){\E}-g({\E}f){\E}=
	\left(\delta_0\left(g\right)-\delta_0\left(f\right)\right) fg{\E},
	\eas
	where \(\delta_0\) is the degree. This shows that \(\mathscr{A}_2\) is a Lie subalgebra of the \(3\)D polynomial vector fields.
Applying  this  formula  and \eqref{eq:nn} we have
\bas
{[}\nA_{i_1,k_1}^{l_1},\nA_{i_2,k_2}^{l_2}{]}&=&\left(\delta_0(\nA_{i_2,k_2}^{l_2})-\delta_0(\nA_{i_1,k_1}^{l_1}) \right) \nN^{l_1}\zz^{i_1}\cdot\nN^{l_2} \zz^{i_2} \del^{k_1+k_2} \E
\nonumber
\\&=& \left(\delta_0(\nA_{i_2,k_2}^{l_2})-\delta_0(\nA_{i_1,k_1}^{l_1}) \right) \sum_{p+k={l_1}+{l_2}}  \lambda^{l_1,i_1}_{l_2,i_2}(\rho) \nN^k \zz^{{i_1}+{i_2}-p}\del^{k_1+k_2+\rho}\E
\nonumber\label{equ:st3d}
\\&=& \left(\delta_0(\nA_{i_2,k_2}^{l_2})-\delta_0(\nA_{i_1,k_1}^{l_1}) \right) \sum_{p+k={l_1}+{l_2}}  \lambda^{l_1,i_1}_{l_2,i_2}(\rho) \nA^k_
{{i_1}+{i_2}-p,{k_1+k_2+\rho}}. 
\eas
This gives us an equivariant decomposition with explicit structure constants (where explicit is a bit restricted by the \(3j\)-symbols, which are given as a finite sum).
\end{proof}
\begin{cor}
	Notice that if we choose to work \(\mod \del^{k_1+k_2+1}\),  formula \eqref{eq:nn} reduces to
\ba\label{COR:MODDELTA}
{[}\nA_{i_1,k_1}^{l_1},\nA_{i_2,k_2}^{l_2}{]}=&
\left(\delta_0(\nA_{i_2,k_2}^{l_2})-\delta_0(\nA_{i_1,k_1}^{l_1}) \right) \lambda^{l_1,i_1}_{l_2,i_2}(0) \nA^{l_1+l_2}_{{i_1}+{i_2},{k_1+k_2}},& \mod  \del^{k_1+k_2+1}.
\ea
This makes the formula explicit, cf. {\em Remark {\ref{rem:lambda0}}}.
\end{cor}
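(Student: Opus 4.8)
The plan is to derive \eqref{COR:MODDELTA} directly from the bracket formula \eqref{eq:3d} of Theorem \ref{thm:FSTC3} by reducing modulo the ideal generated by \(\del^{k_1+k_2+1}\). The key observation is that every summand on the right-hand side of \eqref{eq:3d} is a scalar multiple of \(\nA^k_{i_1+i_2-p,\,k_1+k_2+\rho}\), and that by the defining relation \(\nA^l_{\mu,k}=(\nN^l\zz^\mu)\del^k\E\) the second lower index is exactly the exponent of \(\del\). Hence the \(\rho\)-indexed summand carries the explicit factor \(\del^{\,k_1+k_2+\rho}\); no hidden \(\del\)-powers arise, since \(\nN^k\) acts only on the \(\zz\)-factor (and in any case \(\nN\del=0\)).

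First I would make this \(\del\)-adic filtration explicit and truncate. Working modulo \(\del^{k_1+k_2+1}\), a summand survives precisely when \(k_1+k_2+\rho\le k_1+k_2\), i.e. when \(\rho=0\), so that every term with \(\rho\ge 1\) is annihilated. Setting \(\rho=0\) forces \(p=2\rho=0\), and then the constraint \(p+k=l_1+l_2\) yields \(k=l_1+l_2\); thus the sum collapses to the single term \(\lambda^{l_1,i_1}_{l_2,i_2}(0)\,\nA^{l_1+l_2}_{i_1+i_2,\,k_1+k_2}\). The resulting index is admissible, since \(l_j\le 2\mu_j=2i_j\) gives \(l_1+l_2\le 2(i_1+i_2)\), as required by the definition of the \(\mathscr{A}_2\)-family.

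It then remains to identify the scalar prefactor. I would record the degree identity \(\delta_0(\nA^l_{\mu,k})=\mu+2k\): the operator \(\nN=x\partial_y+2y\partial_z\) is degree-preserving on \(\mathbb{R}[x,y,z]\), while \(\zz=z\) and \(\del=xz-y^2\) are homogeneous of degrees \(1\) and \(2\), so the coefficient \(\nN^l\zz^\mu\del^k\) is homogeneous of degree \(\mu+2k\). Consequently \(\delta_0(\nA^{l_2}_{i_2,k_2})-\delta_0(\nA^{l_1}_{i_1,k_1})=(i_2+2k_2)-(i_1+2k_1)\), which is exactly the scalar \((\mu_2+2k_2-\mu_1-2k_1)\) of \eqref{eq:3d} under the renaming \(\mu_j\mapsto i_j\). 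Combining the truncated sum with this identification produces \eqref{COR:MODDELTA}, and substituting the closed form of \(\lambda^{l_1,i_1}_{l_2,i_2}(0)\) from Remark \ref{rem:lambda0} renders the right-hand side fully explicit. The corollary is essentially a one-line truncation of the theorem, so there is no serious obstacle; the only points demanding (minor) care are the bookkeeping of the \(\del\)-grading — confirming that each transvectant contribution sits in \(\del\)-degree \(k_1+k_2+\rho\) rather than a shifted one — and the degree computation needed to reconcile the two forms of the prefactor.
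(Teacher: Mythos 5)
Your proposal is correct and follows exactly the argument the paper intends (the corollary is stated as an observation with no separate proof): truncating the sum in Theorem \ref{thm:FSTC3} modulo \(\del^{k_1+k_2+1}\) kills every summand with \(\rho\ge 1\), leaving only \(\rho=p=0\), \(k=l_1+l_2\), with the prefactor identified via \(\delta_0(\nA^{l}_{\mu,k})=\mu+2k\). Your extra checks (that \(\nN\del=0\) so no hidden \(\del\)-powers appear, and that the resulting index satisfies \(l_1+l_2\le 2(i_1+i_2)\)) are careful touches but do not change the route.
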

\begin{exm} 
	These are some examples of structure constants of the \(\mathscr{A}_2\) Lie algebra:
	\bas
	{[{\mathsf A}^{2}_{3,0},{\mathsf A}^{14}_{13,0}]}&=& {\frac {325}{16182}}\nA_{{16,16,0}}-{\frac {208}{93}}\nA_{{14,14,1}}+{
		\frac {146578432}{23}}\nA_{{10,10,3}}-{\frac {7192640}{
			2001}}\nA_{{12,12,2}}
	,
	\\
	{[{\mathsf A}^{8}_{7,2},{\mathsf A}^{5}_{7,1}]}&=&-{\frac {4004}{200583}}\nA^{11}_{{12,4}}
	-{\frac {4330871193600}{46189}}\nA^3_{{4,8}}-{\frac {53760000}{5681}}\nA^7_{{8,6}}+{\frac {5268480000
			}{3553}}\nA^5_{{6,7}}
		\\&&
		+{\frac {312134860800}{221}}\nA^1_{{2,9}}+{\frac 
		{13505184}{482885}}\nA^9_{{10,5}}-{\frac {1001}{4011660
	}}\nA^{13}_{{14,3}}.
		\eas
\end{exm}

		\section{Normal form studies for the \(\widehat{\mathscr{A}}\)-family}
		{
				In this part, we intend
				 to find the structure constants and unique normal form for the \(\mathscr{A}\)  family  of  \(2\)D and \(3\)D nilpotent  singularities,  cf. \cite{baider1992further} and  \cite[Equation 4.19]{JF2019vector}, were in the \(3\)D case  we  apply the inversion formula of the Clebsch-Gordan coefficients. Here, these families are called \(\widehat{\mathscr{A}}_1\) and  \(\widehat{\mathscr{A}}_2\) respectively, see \eqref{eq:A1Family} and \eqref{CFamily}. 
				Apart from technical complications in the \(3\)D case, the unique normal form computation is very similar in these two cases, so it makes sense to read the \(2\)D case before going to the \(3\)D case.

\subsection{ Unique normal for the \(\widehat{\mathscr{A}}_1\)-family}\label{2DBT}
One should note that the unicity  results of   this part  are new for the third level normal form and do not follow from  the results of  \cite{baider1992further}. 

The purpose of this part is to study the unique normal form of the following system 
	\ba\label{2N}
\begin{pmatrix} \dot{x}\\ \noalign{\medskip}\dot{y}\end{pmatrix}
=\begin{pmatrix}0& 0\\ \noalign{\medskip}
	1&0\end{pmatrix} \begin{pmatrix} x\\ \noalign{\medskip}y\end{pmatrix}
+ \sum_{m=0}^{\infty}\sum_{n=0}^{m}  a_{m,n}^{(0)} x^n y^{m-n}\begin{pmatrix} x\\ \noalign{\medskip}y\end{pmatrix},
\ea
with \(a_{0,0}^{(0)}=0\) and \( a_{m,n}^{(0)}\)  real constants.
Let \({\mf {gl}}_2\) is defined by \(\nN,\mathsf{H},\nM, \E\) by the following formulas
\bas
\nN&=&
x\frac{\partial}{\partial y},
\qquad
\mathsf{H}=
y\frac{\partial}{\partial y}-x\frac{\partial}{\partial x},
\\
\nM &=&
y\frac{\partial}{\partial x},
\,\,\,\,\qquad
\E=x\frac{\partial}{\partial x}+y\frac{\partial}{\partial y}. 
\eas
Then let \(\zz\) be the linear form in \(x,y\) that is in \(\ker\nM\) and has \(\nH\)-eigenvalue \(1\).
Recall from \cite{baider1992further,mokhtari2019versal} the \(\Sl\)-representation for two dimensional vector field by 
\ba
{\nA}_{m}^{n}&:=& \nN^n \zz^{m}\E,\qquad\qquad\qquad\; (0\leq n\leq m),
\ea

The following special cases are useful for the unique normal form study of \eqref{2N}:
\ba
\nonumber
{[\nM, \nA^0_m]}&=&0,
\\
\nonumber
{[\nN, \nA^n_m]}&=&\nA^{n+1}_m,n<m,
\\\nonumber
{[\nN, \nA^m_m]}&=&0,
\\\label{S23}
{[\nA^{0}_{\nu}, \nA^n_m]}&=&(m-\nu)\nA^n_{m+\nu}.
\ea
Define
\ba\label{eq:A1Family}
\mathscr{A}_1= \langle {\nA}^{l}_{i} \rangle_{i=1,\cdots,\infty, l=0,\cdots,i },\quad
\widehat{\mathscr{A}}_1=\nN+\mathscr{A}_1. 
\ea
Using the \(\Sl\)-structure  one can rewrite \eqref{2N} in the following form 
\ba\label{eq:2NS}
\nv^{(0)}=\nN+\sum_{s=1}^{\infty}\sum_{l=0}^s {\na}^{(0)}_{s,l} {\nA}^l_s\, \in \widehat{\mathscr{A}}_1,
\ea
in which the   \( {\na}^{(0)}_{s,l}\) are real  constants. 

\begin{prop}\label{prop:firstlevel}
The first level normal form of \eqref{eq:2NS} is given by 
\ba\label{E21}
\nv^{(1)}=\nN+\sum_{s=1}^{\infty} {\na}^{(1)}_s {\nA}^0_s \in \widehat{\mathscr{A}}_1,
\ea
where the \({\na}^{(1)}_s\) are the coefficients of the first level formal normal form. 
\end{prop}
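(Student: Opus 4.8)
The plan is to run the standard \(\Sl\)-style first level normalization degree by degree, exploiting the fact recalled in Section~\ref{sec:1} that every finite-dimensional \(\Sl\)-module splits as \(\im\sad_\nN\oplus\ker\sad_\nM\). I would write \(\nv^{(0)}=\nN+\sum_{s\geq 1}f_s\) with \(f_s=\sum_{l=0}^s\na^{(0)}_{s,l}\nA^l_s\) the homogeneous part of vector-field degree \(s+1\) (note \(\nA^l_s=\nN^l\zz^s\E\) is homogeneous of degree \(s+1\), and \(\sad_\nN\) preserves degree since \(\nN\) is linear). Each \(f_s\) lies in \(V_s=\langle \nA^0_s,\ldots,\nA^s_s\rangle\), which by the relations \eqref{S23} is an irreducible \(\Sl\)-module: \(\nN\) raises the index through \([\nN,\nA^n_s]=\nA^{n+1}_s\), and \(\nA^0_s\) is the highest-weight vector because \([\nM,\nA^0_s]=0\). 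I would then normalize the degrees in increasing order, at each stage applying a near-identity transformation \(\exp(\sad_{Y_s})\) generated by a suitable \(Y_s\in\langle \nA^0_s,\ldots,\nA^{s-1}_s\rangle\).

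The key step is to make the two summands of \(V_s\) explicit. From \([\nN,\nA^n_s]=\nA^{n+1}_s\) for \(n<s\) and \([\nN,\nA^s_s]=0\) we read off \(\im\sad_\nN|_{V_s}=\langle \nA^1_s,\ldots,\nA^s_s\rangle\), an \(s\)-dimensional space, while \(\ker\sad_\nM|_{V_s}=\langle\nA^0_s\rangle\) is the remaining one-dimensional highest-weight line; these are complementary, so \(V_s=\im\sad_\nN|_{V_s}\oplus\ker\sad_\nM|_{V_s}\). To leading order \(\exp(\sad_{Y_s})\) changes the degree-\((s+1)\) part by \(-\sad_\nN Y_s\) modulo higher degrees, and since the \(\im\sad_\nN\)-component of \(f_s\), namely \(\sum_{l\geq 1}\na^{(0)}_{s,l}\nA^l_s\), manifestly lies in \(\im\sad_\nN|_{V_s}\), one solves for \(Y_s\) using the explicit preimages \(\sad_\nN\nA^{j-1}_s=\nA^j_s\). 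This cancels every term with \(l\geq 1\) at degree \(s+1\) and leaves only the \(\ker\sad_\nM\)-component \(\na^{(1)}_s\nA^0_s\).

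The main point to be careful about is the recursive bookkeeping: normalizing at degree \(s+1\) modifies the homogeneous parts of all higher degrees, so the surviving coefficients \(\na^{(1)}_s\) are determined one degree at a time and the construction is well defined only as a formal, degree-graded procedure, exactly as in \cite{baider1992further}. The one structural fact that makes the whole reduction close up inside \(\widehat{\mathscr{A}}_1\) is that \(\mathscr{A}_1\) is a Lie subalgebra invariant under \(\sad_\nN\): the relation \([\nA^0_\nu,\nA^n_m]=(m-\nu)\nA^n_{m+\nu}\) shows closure under brackets, and \([\nN,\nA^n_m]=\nA^{n+1}_m\) shows \(\sad_\nN\)-invariance. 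Hence each generator \(Y_s\) and every term produced by \(\exp(\sad_{Y_s})\) remains Eulerian, so the normalized field stays in \(\nN+\mathscr{A}_1\) with nonlinear part in \(\ker\sad_\nM\cap\mathscr{A}_1=\langle\nA^0_s:s\geq 1\rangle\). This yields exactly \eqref{E21}.
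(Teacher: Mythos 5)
Your proposal is correct and follows essentially the same route as the paper: both normalize degree by degree in the $\Sl$-style using the splitting $\im\sad_\nN\oplus\ker\sad_\nM$, and both use the explicit preimages $\sad_\nN\nA^{l-1}_s=\nA^l_s$ to cancel all terms with $l\geq 1$, leaving only the highest-weight terms $\nA^0_s$. The extra remarks you make about $\mathscr{A}_1$ being a $\sad_\nN$-invariant Lie subalgebra and about the recursive bookkeeping are sound and consistent with the paper's (more terse) argument.
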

\bpr
The result follows from  the  \(\Sl\)-style normal form applied to  Equation \eqref{eq:2NS}. 
See Proposition \ref{prop:firstlevel} in this paper, \cite{baider1991unique,sanders2003normal} and \cite[ page 201]{SVM2007} for more details.
The transformation generator at grade \(s\) can be obtained by solving the Homological Equation
\[
\sad_\nM\left({\nA}^l_s-\sad_\nN{\nT}^{l}_s\right)=0.
	\]
	One sees that if \(l>0\), \({\nT}^{l}_s={\nA}^{l-1}_s\) is a solution.
The terms \({\nA}^{0}_s\) are in \(\ker\sad_\nM\) and therefore in the chosen complement of \(\im\sad_\nM\) and we can take \({\nT}^{l}_0=0\).
The ease of this solution procedure is the motivation for the choice of basis.
\epr
We finish this part by giving  the  unique normal form of \eqref{eq:2NS}.

Define 
\(
\nu_{1}:=\min\{s\mid {\na}^{(1)}_s \neq 0\},
\) where the \({\na}^{(1)}_s\) are  the first level normal form coefficients of \eqref{E21}.
\begin{rem}
Since we are going to divide by \({\na}^{(1)}_{\nu_{1}}\)  in the process of computing the transformation, one might want to impose some more requirements on this number. One could for instance 
stipulate that \(|{\na}^1_{\nu_{1}}|>0.3\), thus avoiding  big numbers in the formal transformation. The numbers that we consider too small 
could be  seen as versal deformation parameters.
In this paper, we will not complicate our results by these considerations but leave it to the reader to fill in the necessary modifications.    
\end{rem}
By rewriting Equation  \eqref{E21} we obtain
\ba\label{eq:nu1}
\nv^{(1)}:=\nN+a_{\nu_{1}}^{(1)}\nA^0_{\nu_{1}}+\sum_{s=\nu_{1}+1}^{\infty} a^{(1)}_{s} \nA^0_s.
\ea
Define the grading  \(\delta_{2}\) by 
\bas
\delta_{2}(\nZ^l_k):=2(k+\nu_{1} l).
\eas
Hence  \(\delta_{2}(\nA^0_{\nu_{1}})=\delta_{2}(\nN)=2\nu_{1}.\)  Define \(\nX^{2\nu_{1}}=\nN+a^{(1)}_{\nu_{1}}\nA^0_{\nu_{1}}.\)
Now we use the  kernel of \(\nN\)  which is  generated  by  \( \lbrace \nA^m_m \mid m \geq 1 \rbrace\) to eliminate  extra  terms from \eqref{eq:nu1}. But first we  formulate the following lemma.
\begin{lem}\label{lem:GT2D}
	There exists a transformation \(\nT^n_m=\sum_{i=0}^{n-1} \alpha_i \nA^{n-i-1}_{m+i\nu_{1}}\in\mathscr{A}_{1}\)   such that the following holds: 
	\bas
	[\nX^{2\nu_{1}},\nT^n_m]+\nA^n_m=\alpha_{n} \nA^{0}_{m+n \nu_{1}},
	\eas
	in which 
	\bas
\alpha_i=-\prod_{j=1}^{i}a^{(1)}_{\nu_{1}}  \left(\delta_0(\nA^0_{\nu_{1}})-\delta_0( \nA^{n-j}_{m+(j-1)\nu_{1}})\right),\,\,\ \hbox{for}\,\,\, 0\leq i\leq n. 
	\eas
\end{lem}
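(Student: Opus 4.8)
The plan is to prove the identity by induction on $n$, constructing the transformation $\nT^n_m$ one term at a time and using the bracket relations \eqref{S23} at each step. First I would unwind what $[\nX^{2\nu_1},\nT^n_m]$ means: since $\nX^{2\nu_1}=\nN+a^{(1)}_{\nu_1}\nA^0_{\nu_1}$, the bracket splits as $[\nN,\nT^n_m]+a^{(1)}_{\nu_1}[\nA^0_{\nu_1},\nT^n_m]$, and both pieces can be evaluated term by term on the ansatz $\nT^n_m=\sum_{i=0}^{n-1}\alpha_i\nA^{n-i-1}_{m+i\nu_1}$ using the four relations in \eqref{S23}. The first relation $[\nN,\nA^j_s]=\nA^{j+1}_s$ (valid for $j<s$) raises the upper index, while $[\nA^0_{\nu_1},\nA^j_s]=(s-\nu_1)\nA^j_{s+\nu_1}$ shifts the lower index up by $\nu_1$ and multiplies by the degree difference. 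Applying these, the $\nN$-part of $[\nX^{2\nu_1},\nT^n_m]$ produces terms $\alpha_i\nA^{n-i}_{m+i\nu_1}$ and the $\nA^0_{\nu_1}$-part produces terms proportional to $\alpha_i\nA^{n-i-1}_{m+(i+1)\nu_1}$.

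The key observation is that these two families of terms interleave: the $\nN$-part of the $i$th summand and the $\nA^0_{\nu_1}$-part of the $(i-1)$th summand both land on the basis vector $\nA^{n-i}_{m+i\nu_1}$. The next step is therefore to collect the coefficient of each $\nA^{n-i}_{m+i\nu_1}$ for $i=0,\dots,n$ and demand that, after adding the $+\nA^n_m$ term, everything cancels except the top term $\alpha_n\nA^0_{m+n\nu_1}$. This yields a recursion of the form $\alpha_i=-a^{(1)}_{\nu_1}\bigl(\delta_0(\nA^0_{\nu_1})-\delta_0(\nA^{n-i}_{m+(i-1)\nu_1})\bigr)\alpha_{i-1}$ with the normalization $\alpha_0=-1$ coming from matching the $+\nA^n_m$ term; unrolling this recursion gives exactly the product formula stated for $\alpha_i$. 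One must check that the degree difference $\delta_0(\nA^0_{\nu_1})-\delta_0(\nA^{n-j}_{m+(j-1)\nu_1})$ appearing in the bracket $[\nA^0_{\nu_1},\cdot]$ matches the $(s-\nu_1)$ factor from \eqref{S23} rewritten in terms of the degree grading $\delta_0$, which is a routine bookkeeping identity.

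The main obstacle I expect is the index bookkeeping at the boundary cases, not the algebra: one must verify that the relation $[\nN,\nA^j_s]=\nA^{j+1}_s$ is only invoked when $j<s$ (so that we stay in range and do not accidentally hit the $[\nN,\nA^m_m]=0$ relation prematurely), and that the telescoping leaves precisely the single surviving term $\alpha_n\nA^0_{m+n\nu_1}$ with no residual contribution from the top summand $i=n-1$ of $\nT^n_m$. I would handle this by tracking which summand can produce the would-be term $\nA^{n+1}_{\cdots}$ and confirming it is annihilated. Once the recursion is set up correctly, the explicit product formula for $\alpha_i$ follows immediately by induction, and linear extension is not needed since everything is already on the homogeneous basis vectors $\nA^n_m$.
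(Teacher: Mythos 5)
Your proposal follows essentially the same route as the paper's own proof: split $[\nX^{2\nu_{1}},\nT^n_m]$ into the $[\nN,\cdot]$ and $a^{(1)}_{\nu_{1}}[\nA^0_{\nu_{1}},\cdot]$ parts, evaluate term by term with \eqref{S23}, observe that the two families of terms interleave on the basis vectors $\nA^{n-i}_{m+i\nu_{1}}$, and solve the resulting two-term recursion with $\alpha_0=-1$. The one slip is a sign in your sketched recursion: the relation that actually makes the intermediate coefficients cancel is $\alpha_i=a^{(1)}_{\nu_{1}}\bigl(\delta_0(\nA^0_{\nu_{1}})-\delta_0(\nA^{n-i}_{m+(i-1)\nu_{1}})\bigr)\alpha_{i-1}$ with no extra minus sign, so that the single overall minus in the stated product formula comes only from $\alpha_0=-1$; with your extra factor of $-1$ at each step the unrolled product would alternate in sign and not match the statement.
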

\bpr
We intend to find the coefficient of previous vector field such that  \bas	[\nT^n_m, \nX^{2\nu_{1}}]+\nA^n_m\in \ker(\sad_\nM).\eas
 By straightforward calculation one has 
\bas
[ \nX^{2\nu_{1}},\nT^n_m]+\nA^n_m&=&\nA^n_m+[\nN+a_{\nu_{1}}^{(1)}\nA^0_{\nu_{1}},\sum_{i=0}^{n-1} \alpha_i \nA^{n-i-1}_{m+i\nu_{1}}]
\\&=&\nA^n_m+\sum_{i=0}^{n-1} \alpha_i \nA^{n-i}_{m+i\nu_{1}}+\sum_{i=0}^{n-1}
 \left(\delta_0( \nA^{n-i-1}_{m+i\nu_{1}})-\delta_0(\nA^0_{\nu_{1}})\right)
 \alpha_i a_{\nu_{1}}^{(1)} \nA^{n-i-1}_{m+i\nu_{1}+\nu_1}
\\&=&\nA^n_m+ \alpha_0 \nA^{n}_{m}+\sum_{i=1}^{n-1} \left( \alpha_i+
 \left(\delta_0( \nA^{n-i}_{m+(i-1)\nu_{1}})-\delta_0(\nA^0_{\nu_{1}})\right) 
 \alpha_{i-1} a_{\nu_{1}}^{(1)} \right)\nA^{n-i}_{m+i\nu_{1}}
\\&&+ \left(\delta_0( \nA^{0}_{m+(n-1)\nu_{1}})-\delta_0(\nA^0_{\nu_{1}})\right) \alpha_{n-1} a_{\nu_{1}}^{(1)} \nA^{0}_{m+n\nu_{1}}.
\eas
Now, let
\(
\alpha_0=-1
\) 
and 
\[
\alpha_i=
 \left(\delta_0(\nA^0_{\nu_{1}})-\delta_0( \nA^{n-i}_{m+(i-1)\nu_{1}})\right)a_{\nu_{1}}^{(1)} 
\alpha_{i-1}  ,\,\,\ \hbox{for}\,\,\, 1\leq i\leq n.
\]
Finally we obtain 
\[[ \nX^{2\nu_{1}},\nT^n_m]+\nA^n_m=  \left(\delta_0( \nA^{0}_{m+(n-1)\nu_{1}})-\delta_0(\nA^0_{\nu_{1}})\right)\alpha_{n-1}a_{\nu_{1}}^{(1)} \nA^{0}_{m+n \nu_{1}}= -\alpha_{n} \nA^{0}_{m+n \nu_{1}}.\]
\epr

\begin{thm}
	There exists a  transformation that takes \eqref{2N} into  its  second level  normal form  
\ba\label{22N}
\nv^{(2)}:=\nN+a_{\nu_{1}}^{(1)} \nA^0_{\nu_{1}}+\sum_{s=\nu_{1}+1}^{\infty} a^{(2)}_{s} \nA^0_s,
\ea
where  \(a^{(2)}_s\) is zero for \(s\stackrel{(1+\nu_{1})}{\equiv} \nu_{1}.\)
\end{thm}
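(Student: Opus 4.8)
The plan is to push the normalization beyond the first level, taking $\nX^{2\nu_{1}}=\nN+a_{\nu_{1}}^{(1)}\nA^{0}_{\nu_{1}}$ as the new unperturbed operator and $\delta_{2}$ as the new filtration. Because $\delta_{2}(\nX^{2\nu_{1}})=2\nu_{1}$ while $\delta_{2}(\nA^{0}_{s})=2s$, the first-level normal form \eqref{eq:nu1} is a $\delta_{2}$-graded perturbation of $\nX^{2\nu_{1}}$, so I would reduce it by an ascending induction on the grade $2s$, $s>\nu_{1}$. At grade $2s$ the only first-level normal form term is $\nA^{0}_{s}$, and the whole question is whether $\nA^{0}_{s}$ can be killed by a transformation generated in $\mathscr{A}_{1}$, modulo terms of strictly higher $\delta_{2}$-grade produced by the flow.

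The engine is the homological equation at grade $2s$. Writing $d_{2}=\sad_{\nX^{2\nu_{1}}}$, which carries the $\mathscr{A}_{1}$-generators of grade $2(s-\nu_{1})$ into grade $2s$, I must decide whether $\nA^{0}_{s}\in\IM d_{2}$. On the bases $w_{l}=\nA^{l}_{s-\nu_{1}l}$ of the grade-$2s$ space and $u_{l}=\nA^{l}_{s-\nu_{1}(l+1)}$ of the grade-$2(s-\nu_{1})$ space, the structure constants \eqref{S23} give the lower-bidiagonal form $d_{2}u_{l}=w_{l+1}+\beta_{l}w_{l}$ with $\beta_{l}=a_{\nu_{1}}^{(1)}\bigl(s-\nu_{1}(l+2)\bigr)$. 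Lemma \ref{lem:GT2D} packages the back-substitution of this system: for each admissible $(n,m)$ with $m+n\nu_{1}=s$ it produces $\nT^{n}_{m}\in\mathscr{A}_{1}$ with $d_{2}\nT^{n}_{m}=-\nA^{n}_{m}+\alpha_{n}\nA^{0}_{s}$, so $\nA^{0}_{s}\in\IM d_{2}$ as soon as both $\alpha_{n}\neq0$ and the companion $\nA^{n}_{m}$ lies in $\IM d_{2}$.

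To decide this I read off two facts. First, the domain of $d_{2}$ contains a generator $\nA^{l}_{l}\in\ker\nN$ exactly when $(s-\nu_{1})/(1+\nu_{1})$ is an integer, i.e.\ when $s\equiv\nu_{1}\pmod{1+\nu_{1}}$; this is the relation that can make the bidiagonal system solvable for $w_{0}$. Second, since $a_{\nu_{1}}^{(1)}\neq0$, taking $n=(s-\nu_{1})/(1+\nu_{1})$ and $m=\nu_{1}+n$ (admissible, as $m-n=\nu_{1}\ge0$) makes every factor $\nu_{1}-m-(j-1)\nu_{1}=-n-(j-1)\nu_{1}$ strictly negative, so $\alpha_{n}\neq0$ and Lemma \ref{lem:GT2D} furnishes the candidate generator $\nT^{n}_{m}$. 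Provided its companion term $\nA^{n}_{m}$ can be absorbed into $\IM d_{2}$ (discussed below), $\nT^{n}_{m}$ clears $\nA^{0}_{s}$; assembling these transformations over all grades $s\equiv\nu_{1}\pmod{1+\nu_{1}}$, $s>\nu_{1}$, produces \eqref{22N}. (The theorem makes no claim about the residues $s\not\equiv\nu_{1}$, for which the $\ker\nN$ relation is absent and $\nA^{0}_{s}$ survives.)

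The step I expect to be the real obstacle is the companion condition $\nA^{n}_{m}\in\IM d_{2}$. For the choice above, $\nA^{n}_{m}=w_{L}$ with $L=\lfloor s/(1+\nu_{1})\rfloor$, and $w_{L}$ is recovered from $d_{2}u_{L}=\beta_{L}w_{L}$ only when $\beta_{L}=a_{\nu_{1}}^{(1)}\bigl(s-\nu_{1}(L+2)\bigr)\neq0$. Within the class $s\equiv\nu_{1}$ this fails at the single resonant grade $s=\nu_{1}(\nu_{1}+2)$, where $\beta_{L}=0$, the generator $\nA^{\nu_{1}}_{\nu_{1}}$ drops into $\ker d_{2}$, and $w_{L}$ (equivalently $\nA^{0}_{s}$) becomes a cokernel class at leading order. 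Handling this resonance---by a transformation acting through the higher terms of \eqref{eq:nu1} via the brackets \eqref{S23}, or by deferring the term to the third level---is the genuine difficulty; once it is dispatched, the only remaining point is the formal convergence of the infinite composition, which is automatic because each transformation perturbs only strictly higher $\delta_{2}$-grades.
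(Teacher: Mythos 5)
Your argument is correct and is essentially the paper's own proof: both remove $\nA^0_s$ for $s=m(1+\nu_{1})+\nu_{1}$ by bracketing the $\ker\sad_\nN$ generator $\nA^m_m$ with $\nX^{2\nu_{1}}$ to produce the companion $(m-\nu_{1})\nA^m_{m+\nu_{1}}$ and then pushing it down to $\nA^{0}_{m+m\nu_{1}+\nu_{1}}$ via Lemma \ref{lem:GT2D}, which is exactly your back-substitution of the bidiagonal homological system. The resonance you flag at $s=\nu_{1}(\nu_{1}+2)$, where $\beta_L=0$ and $\nA^{\nu_{1}}_{\nu_{1}}$ falls into $\ker\sad_{\nX^{2\nu_{1}}}$, is precisely the case $m=\nu_{1}$ that the paper excludes and defers to the third level, so your treatment of the exceptional grade also agrees with the paper's.
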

\bpr
As mentioned before,  \(\nA^m_m\) is in   \(\ker\sad_\nN\).
Then, 
\bas
[\nX^{2\nu_{1}},\nA^m_m]= (m-\nu_{1}) \nA^m_{m+\nu_{1}}.
\eas
Let us assume that  \(m\neq \nu_{1}.\) If \(m=\nu_{1}\)  then \(A^{\nu_{1}}_{\nu_{1}}\) generates the kernel for \(\nX^{2\nu_{1}}.\)
This kernel term will be used  in the last theorem of this section to provide the unique normal form.   
Now, by applying Lemma \ref{lem:GT2D}  we get 
\bas
[\nX^{2\nu_{1}},\nA^m_m+(m-\nu_{1})\nT^m_{m+\nu_{1}}]&=& -\alpha_{m}(m-\nu_{1}) \nA^{0}_{m+m \nu_{1}+\nu_{1}}, 
\eas
where \(\alpha_m\) is nonzero.
\epr
From \eqref{S23}  we have \([\nA^{0}_{\nu_{1}}, \nA^{\nu_{1}}_{\nu_{1}}]=0.\) Then \(\nA^{\nu_{1}}_{\nu_{1}}\) generates the kernel for
\(\nX^{2\nu_{1}}.\)
Set
 \[\nu_{2}=\min\{ s\mid a^{(2)}_s\neq 0\}.\]
 We close  this section by  giving the main theorem about the unique normal form of \eqref{eq:2NS}.
\begin{thm}
	There exists the  transformation that takes \eqref{22N} into  its  unique (third level) normal form  
	\ba\label{u2N}
	\nv^{(3)}:=\nN+a_{\nu_{1}}^{(1)}\nA^0_{\nu_{1}}+a_{\nu_{2}}^{(2)} \nA^0_{\nu_{2}} +\sum_{s=\nu_{2}+1}^{\infty} a^{(3)}_{s} \nA^0_s,
	\ea
	or equivalently
	\ba\label{u2N}
	\begin{pmatrix} \dot{x}\\ \noalign{\medskip}\dot{y}\end{pmatrix}
	=\begin{pmatrix}0& 0\\ \noalign{\medskip}
		1&0\end{pmatrix} \begin{pmatrix} x\\ \noalign{\medskip}y\end{pmatrix}
	+a_{\nu_{1}}^{(1)} y^{\nu_{1}}  \begin{pmatrix} x\\ \noalign{\medskip}y\end{pmatrix}
	+	a_{\nu_{2}}^{(2)} y^{\nu_{2}}  \begin{pmatrix} x\\ \noalign{\medskip}y\end{pmatrix}+ \sum_{s=\nu_{2}+1}^{\infty}  a_{s}^{(3)}  y^{s}\begin{pmatrix} x\\ \noalign{\medskip}y\end{pmatrix},
	\ea 
	where  \(a^{(3)}_s\) is zero for \(s\stackrel{(1+\nu_{1})}{\equiv} \nu_{1}\) and  \(s\stackrel{(1+\nu_{1})}{\equiv} \nu_{2}.\)
\end{thm}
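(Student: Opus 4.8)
The plan is to carry out the third stage of the $\Sl$-style hierarchical normalisation, in exact parallel to the passage from \eqref{eq:nu1} to \eqref{22N}, but now with the two leading terms $\nN$ and $a_{\nu_1}^{(1)}\nA^0_{\nu_1}$ frozen and the second surviving term $a_{\nu_2}^{(2)}\nA^0_{\nu_2}$ playing the role that $a_{\nu_1}^{(1)}\nA^0_{\nu_1}$ played at the previous level. First I would record that the second level normal form \eqref{22N} already has $a^{(2)}_s=0$ whenever $s\stackrel{(1+\nu_1)}{\equiv}\nu_1$, so that congruence class needs no new work and is merely inherited; the genuine content of the theorem is to clear the class $s\stackrel{(1+\nu_1)}{\equiv}\nu_2$ for $s>\nu_2$. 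The admissible generators at this level are those preserving the level-two normal form to leading order, that is, the centraliser of $\nX^{2\nu_1}=\nN+a_{\nu_1}^{(1)}\nA^0_{\nu_1}$; using $[\nN,\nA^m_m]=0$ together with the last relation of \eqref{S23} one checks this centraliser is spanned by $\nA^{\nu_1}_{\nu_1}$ and the lower-index correctors furnished by Lemma \ref{lem:GT2D}.

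The key computation is the induced homological operator. Bracketing a generator built from $\nA^m_m$ against the new leading term gives, by \eqref{S23},
\[
[\,a_{\nu_2}^{(2)}\nA^0_{\nu_2},\nA^m_m\,]=a_{\nu_2}^{(2)}(m-\nu_2)\,\nA^m_{m+\nu_2},
\]
which is nonzero for $m\neq\nu_2$ since $a_{\nu_2}^{(2)}\neq0$ by the definition of $\nu_2$. This output lies in $\IM\sad_\nN$ rather than in $\ker\nM$, so I would then apply the push-down of Lemma \ref{lem:GT2D} against $\nX^{2\nu_1}$: lowering the upper index from $m$ to $0$ in $m$ steps raises the lower index by $m\nu_1$, turning $\nA^m_{m+\nu_2}$ into a nonzero multiple of $\nA^0_{m(1+\nu_1)+\nu_2}$. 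Because $m(1+\nu_1)+\nu_2\stackrel{(1+\nu_1)}{\equiv}\nu_2$, letting $m$ range over the positive integers realises exactly the targets $\nA^0_s$ with $s\stackrel{(1+\nu_1)}{\equiv}\nu_2$ and $s>\nu_2$, each with a controllable nonzero coefficient; solving the triangular linear system for the generator coefficients grade by grade then cancels these and produces \eqref{u2N}.

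I would organise the cancellation as an induction on a refined grading $\delta_3$ adapted to $\nA^0_{\nu_2}$ (grouping $\nA^0_{\nu_2}$ with the frozen part), processing one $\delta_3$-grade at a time so that only finitely many generators act per grade and the formal transformation is well defined. The term $a_{\nu_2}^{(2)}\nA^0_{\nu_2}$ itself survives because the smallest index in its congruence class, $s=\nu_2$, is not in the range of the above map, while the residual centraliser generator $\nA^{\nu_2}_{\nu_2}$ (the analogue of $\nA^{\nu_1}_{\nu_1}$, for which the factor $m-\nu_2$ vanishes) is precisely what a hypothetical fourth level would consume and hence marks where reduction at this level stops.

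The hard part will be the bookkeeping of the interaction with the normalisation already achieved. A centraliser generator also acts on $\nX^{2\nu_1}$, so one must verify that the class $s\stackrel{(1+\nu_1)}{\equiv}\nu_1$ is not reintroduced, or, where it is, that such terms have strictly larger $\delta_3$-grade and are reabsorbed by the correctors of Lemma \ref{lem:GT2D}. Moreover, at the exceptional index $s=\nu_2(2+\nu_1)$ the primary generator $\nA^{\nu_2}_{\nu_2}$ falls into the kernel (its factor $m-\nu_2$ vanishes), exactly as the index $\nu_1(2+\nu_1)$ did at the second level, so this single degree must be cleared by a secondary, higher-order generator coming from a nested bracket in the Lie series. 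Making this simultaneous scheme consistent, and checking that every push-down constant that is needed is genuinely nonzero so that the image is the stated congruence class and nothing is over- or under-counted against $\IM\sad_\nN+\IM\sad_{\nX^{2\nu_1}}$, is where the real work lies; the underlying algebra is routine once \eqref{S23} and Lemma \ref{lem:GT2D} are in hand.
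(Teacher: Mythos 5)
Your proposal diverges from the paper's argument at the decisive point, and the divergence is a genuine gap rather than an alternative route. You correctly observe at the outset that the admissible third-level generators must centralise $\nX^{2\nu_{1}}=\nN+a^{(1)}_{\nu_{1}}\nA^0_{\nu_{1}}$ to leading order, and that this centraliser is spanned (together with its correctors from Lemma \ref{lem:GT2D}) by the single element $\nA^{\nu_{1}}_{\nu_{1}}$ --- the one value $m=\nu_{1}$ for which the factor $(m-\nu_{1})$ coming from \eqref{S23} kills the level-two action. But your key computation then brackets $a^{(2)}_{\nu_{2}}\nA^0_{\nu_{2}}$ against $\nA^m_m$ for \emph{all} positive $m$ and claims that letting $m$ range over the positive integers clears the whole congruence class $s\stackrel{(1+\nu_{1})}{\equiv}\nu_{2}$. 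For $m\neq\nu_{1}$ the generator $\nA^m_m$ is not in the centraliser: its leading effect, at the strictly lower $\delta_{2}$-grade $2(\nu_{1}+m(1+\nu_{1}))$, is $a^{(1)}_{\nu_{1}}(m-\nu_{1})\nA^m_{m+\nu_{1}}\neq 0$, and this is precisely the effect that was already spent at the second level to produce \eqref{22N}. Reusing $\nA^m_m$ to attack $\nA^0_{\nu_{2}+m(1+\nu_{1})}$ necessarily perturbs the already-normalised coefficient of $\nA^0_{\nu_{1}+m(1+\nu_{1})}$ at lower grade, so these generators are simply unavailable; the difficulty you defer to ``the hard part'' is not bookkeeping that can be repaired but the reason the scheme fails. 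The same objection applies to your proposed use of $\nA^{\nu_{2}}_{\nu_{2}}$ at a hypothetical fourth level: it does not centralise $\nX^{2\nu_{1}}$ either.

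The paper's proof is correspondingly much smaller in scope: the only new generator at the third level is $\nA^{\nu_{1}}_{\nu_{1}}$, it is played against the new leading term via $[\nA^0_{\nu_{2}},\nA^{\nu_{1}}_{\nu_{1}}]=(\nu_{1}-\nu_{2})\nA^{\nu_{1}}_{\nu_{1}+\nu_{2}}$, and after the push-down of Lemma \ref{lem:GT2D} exactly one further coefficient, that of $\nA^{0}_{\nu_{1}+\nu_{2}+\nu_{1}^{2}}$, is removed; note that $\nu_{1}+\nu_{2}+\nu_{1}^{2}$ is a single member of the class $s\stackrel{(1+\nu_{1})}{\equiv}\nu_{2}$. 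If you want to defend the stronger reading that the entire class is cleared, you would need to exhibit, for each $m$, a level-three-admissible generator whose leading output is $\nA^0_{\nu_{2}+m(1+\nu_{1})}$; neither your argument nor the paper's supplies one for $m\neq\nu_{1}$.
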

\bpr
From the structure constants one has 
\bas
[\nA^0_{\mu},\nA^m_m]=(m-\mu)A^m_{m+\mu},\quad \hbox{with }\left\{ \begin{array}{lcr}\mu=\nu_{2},
	 &\hbox{ if } &m=\nu_{1},\\\mu=\nu_{1}, &\hbox{ if }& m\neq \nu_{1}.\end{array} \right.
\eas

By applying Lemma \ref{lem:GT2D} one obtains 
\bas
[\nA^0_{\mu},\nA^m_m]+[\nX^{2\nu_{1}},(m-\mu)\nT^m_{m+\mu}]=-
(m-\mu)\alpha_{m} \nA^{0}_{m+\mu+m \nu_{1}},
\eas
which is non-zero. Hence for \(m=\nu_{1}\) and \(\mu=\nu_{2}\) one obtains
\bas
[\nA^0_{\nu_{2}},\nA^{\nu_{1}}_{\nu_{1}}]
+[\nX^{2\nu_{1}},(\nu_{1}-\nu_{2})\nT^{\nu_{1}}_{\nu_{1}+\nu_{2}}]&=&
(\nu_{2}-\nu_{1}) \alpha_{m}
 \nA^{0}_{\nu_{1}+\nu_{2}+\nu_{1}^2 }.
\eas
Therefore,   the term  \(\nA^{0}_{\nu_{1}+\nu_{2}+\nu_{1}^2}\) can be eliminated from \eqref{22N}.
\epr
}


\subsection{ Unique normal for the \(\widehat{\mathscr{A}}_2\)-family }\label{3DBT}
In this section, we study 
 the unique normal form for the class  of equations of the form 
\ba\label{3N}
\begin{pmatrix} \dot{x}\\ \noalign{\medskip}\dot{y}\\ \noalign{\medskip}\dot{z}\end{pmatrix}
=\begin{pmatrix}0& 0&0\\ \noalign{\medskip}
	1&0&0
\\ \noalign{\medskip}
0&2&0
\end{pmatrix} \begin{pmatrix} x\\ \noalign{\medskip}y \\ \noalign{\medskip}{z}\end{pmatrix}
+\sum_{k=0}^{\infty} \sum_{\mu=0}^{\infty} \sum_{l=0}^{2\mu} 
{ a}^{(0)}_{l,\mu,k}{\mathsf a}^l_{\mu,k}\begin{pmatrix} x\\ \noalign{\medskip}y\\ \noalign{\medskip}{z}\end{pmatrix},
\ea
where 
\(
{ a}^{(0)}_{l,\mu,k}\in{\mathbb{R}},\) \(  { a}^{(0)}_{0,0,0}=0\) and \({\mathsf a}^l_{\mu,k}={\del^k } \nN^l{\zz}^\mu\)  as defined in Section \ref{sec:strucc3}
or 
\ba\label{3Nv}
\nv^{(0)}
= \mathsf{N}
+\sum_{k=0}^{\infty} \sum_{\mu=0}^{\infty} \sum_{l=0}^{2\mu} 
{ a}^{(0)}_{l,\mu,k}{\mathsf A}^l_{\mu,k}\in\widehat{\mathscr{A}}_2,
\ea
where \({\mathsf A}^l_{\mu,k}={\mathsf a}^l_{\mu,k}\mathsf{E}\).
\begin{lem}
	There exists a sequence of   vector fields belonging  to \({\mathscr{A}_2}\)
	which takes \eqref{3N} to the following first level normal form
	\ba\label{Eq:FL3}
	\nv^{(1)}=\nN+\sum_{\mu=0}^{\infty}\sum_{k=0}^{\infty} a_{\mu,k}^{(1)}{
		 \nA}^0_{\mu,k},
	\ea
	where \(a_{\mu,k}^{(1)}\) denotes  the coefficients of vector fields in the first level normal form.
\end{lem}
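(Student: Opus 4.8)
The plan is to run the standard \(\Sl\)-style first-level normalization grade by grade, exactly as in the two-dimensional case of Proposition \ref{prop:firstlevel}, the only genuinely new feature being the spectator index \(k\). First I would record the \(\Sl\)-module structure of \(\mathscr{A}_2\). Since \(\del\in\ker\sad_\nN\cap\ker\sad_\nM\) and \([\nN,\E]=[\nM,\E]=0\), one computes \(\sad_\nN\nA^0_{\mu,k}=(\nN\zz^\mu)\del^k\E=\nA^1_{\mu,k}\), and iterating, \(\nA^l_{\mu,k}=\sad_\nN^l\nA^0_{\mu,k}\). Hence, for each fixed pair \((\mu,k)\), the vectors \(\{\nA^l_{\mu,k}\}_{0\le l\le 2\mu}\) span an irreducible \(\Sl\)-module of dimension \(2\mu+1\) whose highest-weight vector \(\nA^0_{\mu,k}\) (of \(\nH\)-eigenvalue \(2\mu\)) lies in \(\ker\sad_\nM\). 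The factor \(\del^k\) is carried along unchanged, so the decomposition is identical to the 2D one fibre-wise in \(k\).

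Next I would invoke the splitting \(V\cong\im\sad_\nN\oplus\ker\sad_\nM\) recalled in the introduction: at each grade \(d=\mu+2k\) (the degree \(\delta_0\)), the nonlinear terms of \(\nv^{(0)}\) of that grade form a finite-dimensional \(\Sl\)-module \(\bigoplus_{\mu+2k=d}\langle\nA^l_{\mu,k}\rangle_{0\le l\le 2\mu}\), which splits as above. The \(\ker\sad_\nM\) summand is spanned precisely by the highest-weight vectors \(\{\nA^0_{\mu,k}:\mu+2k=d\}\), while every \(\nA^l_{\mu,k}\) with \(l>0\) lies in \(\im\sad_\nN\); this identifies the surviving terms in \eqref{Eq:FL3} with the span of the \(\nA^0_{\mu,k}\). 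The elimination is then the homological equation solved term by term in the adapted basis: to remove a term \(\nA^l_{\mu,k}\) with \(l>0\) I would use the generator \(\nT^l_{\mu,k}=\nA^{l-1}_{\mu,k}\in\mathscr{A}_2\), for which \(\sad_\nN\nT^l_{\mu,k}=\nA^l_{\mu,k}\), so the corrected term \(\nA^l_{\mu,k}-\sad_\nN\nT^l_{\mu,k}=0\) trivially satisfies \(\sad_\nM\bigl(\nA^l_{\mu,k}-\sad_\nN\nT^l_{\mu,k}\bigr)=0\); for \(l=0\) one takes \(\nT^0_{\mu,k}=0\), since \(\nA^0_{\mu,k}\in\ker\sad_\nM\) already.

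Because each such generator keeps \(k\) fixed and has positive grade \(\mu+2k>0\) (the hypothesis \(a^{(0)}_{0,0,0}=0\) removing the degenerate degree-zero term \(\E\)), it leaves all lower-grade terms untouched and perturbs only terms of strictly higher grade, so the infinite composition over increasing \(d\) is a well-defined formal transformation yielding \eqref{Eq:FL3}. The computation is routine once the module structure is in place; the only point requiring care, and the place I would expect trouble in a higher-dimensional analogue, is verifying that the generators \(\nT^l_{\mu,k}\) stay inside \(\mathscr{A}_2\) and that the grading is compatible, so that the normalization is an endomorphism of \(\widehat{\mathscr{A}}_2\). Here this is immediate: \(\mathscr{A}_2\) was already shown to be a Lie subalgebra closed under \(\sad_\nN\), and the inertness of \(\del\) reduces the whole argument, fibre by fibre in \(k\), to the established 2D normalization of Proposition \ref{prop:firstlevel}.
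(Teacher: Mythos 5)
Your argument is correct and is essentially the paper's own: the paper disposes of this lemma in one line by appealing to the \(\Sl\)-style splitting \(V\cong\im\sad_\nN\oplus\ker\sad_\nM\) together with \(\sad_\nM\nA^0_{\mu,k}=0\), and its 2D counterpart (Proposition \ref{prop:firstlevel}) spells out exactly the homological-equation solution \(\nT^l_{\mu,k}=\nA^{l-1}_{\mu,k}\) that you give, with \(\del^k\) carried along as an inert factor. Your version merely makes explicit the module structure and the grade-by-grade composition that the paper leaves implicit.
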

	\bpr
	Recall that \(\ker\sad_\nM\) is  \(\Sl\)-style normal form. Then, 
	by fixing  this style 
the proof follows from 
\(
	\sad_{\nM} {\nA}^{0}_{\mu,k}= 0
	.\) 
	\epr
Now, define  the grading function by taking \(r=s+r_s\) and
\bas
{\delta}_{r_s,s}({\nZ}^l_{i,k}):=
(r_s+1)(i+2k)+(l-i)(r_s+2s),
\eas
in which \(r_s=\min\{m\mid a^{(1)}_{m,s}\neq 0\}\).
Hence, 
\bas
{\delta}_{r_s,s}({\nN})={\delta}_{r_s,s}({\nA}^0_{r_s,s})=r_s+2s.
\eas
 Remark that \(\delta_{0,0}({\nZ}^l_{i,k})=i+2k\) coincides with the previous definition of \(\delta_0\).
 Define, 
 \bas
 \nX^{r_s+2s}&=&{\nN}+a^{(1)}_{r_s,s}{\nA}^0_{r_s,s}.
 \eas
 Before we start the study of further elimination of  terms from Equation \eqref{Eq:FL3} we would like to discuss in details about the filtering approach that plays the main role in the rest of this paper. 
 
 From Theorem \eqref{equ:st3d}, it follows that  the power of \(\del\) measures the filtering. 
 In the process  of normalization we would be able to get rid of all terms (with power of \(\del\))  that generated 
 in the next level of normal form.
 
 Assume that in the second level normal form one wants to eliminate the terms \(\nA^0_{i,s}\) where \(i>r_s,\)
 then one does not need to take into account  the terms with a higher power of \(\del^s.\)
 Therefore, the transformation \(\nT^{i-r_s}\) can be obtained by solving the following  
 \bas
 [\nX^{r_s+2s}, \nT^{i-r_s}]\equiv\nA^0_{i,s},
 \eas 
 where we start, fixing the degree \(\delta=i+2s\), our computation with \(s=0\). With the degree fixed,
 the filtered computation will take only a finite number of steps. So first we eliminate \(A_{\delta,0}^0\),
 then \(A_{\delta-2,1}^0\), until we have exhausted all terms in \(\ker\sad_\nM\) with \(\delta_{r_s,s}=\delta\).
 \begin{thm} \label{thm:3dtransformation}
 	There exists transformation as \(\nT^n_{m,k}\)
 	such that the following holds:
 	\bas
 	[	\nT^n_{m,k},{\nX}^{r_s+2s}]+\nA^n_{m,k}&\equiv&-\alpha _{n}\nA^{0}_{m+nr_s,k+ns}.
 	\eas 
 Here we define \(	\alpha_i\) to be
 \bas
 \alpha _i
 &=&
 -\prod_{j=1}^{i} a^{(1)}_{r_s,s}
 \left(\delta_0({\nA}^0_{r_s,s})-\delta_0(\nA^{n-j+1}_{m+jr_s-r_s,k+js-s})\right)\lambda^{0,r_s}_{n-j+1,m+jr_s-r_s}(0),\,\,\ \hbox{for}\,\,\, 0\leq i\leq n.
 \eas
 \end{thm}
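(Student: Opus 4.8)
The plan is to transcribe the two‑dimensional argument of Lemma~\ref{lem:GT2D} to the present setting, substituting the elementary relations \eqref{S23} by the filtered three‑dimensional structure constants of Theorem~\ref{thm:FSTC3}. Concretely, I would look for the generator in exactly the same shape as in the planar case, namely
\[
\nT^n_{m,k}=\sum_{i=0}^{n-1}\alpha_i\,\nA^{\,n-i-1}_{m+ir_s,\,k+is}\in\mathscr{A}_2,
\]
with coefficients $\alpha_i$ to be pinned down, and then expand the bracket against $\nX^{r_s+2s}=\nN+a^{(1)}_{r_s,s}\nA^0_{r_s,s}$ (using antisymmetry to work with $[\nX^{r_s+2s},\nT^n_{m,k}]$, which only flips the global sign). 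This splits the computation into a purely nilpotent contribution $[\nN,\cdot]$ and a contribution $a^{(1)}_{r_s,s}[\nA^0_{r_s,s},\cdot]$ coming from the nonlinear term of $\nX^{r_s+2s}$.

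For the nilpotent part I would use the identity $[\nN,\nA^l_{\mu,k}]=\nA^{\,l+1}_{\mu,k}$ (valid while $l<2\mu$), which holds because $\sad_\nN$ is a derivation with $\nN\del=0$ and $[\nN,\E]=0$, so $\sad_\nN$ acts only on the $\zz^\mu$ factor. This turns $[\nN,\nT^n_{m,k}]$ into $\sum_i\alpha_i\,\nA^{\,n-i}_{m+ir_s,\,k+is}$, whose $i=0$ summand is exactly $\nA^n_{m,k}$ and therefore cancels the added term $\nA^n_{m,k}$ once we set $\alpha_0=-1$. For the second part I would invoke the filtration set up in the paragraph preceding the statement: since the power of $\del$ grades the computation and at this stage we only retain terms modulo one higher power of $\del$, Theorem~\ref{thm:FSTC3} collapses to its $\rho=0$ leading term, i.e.\ to Corollary~\eqref{COR:MODDELTA}. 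Thus
\[
[\nA^0_{r_s,s},\nA^{\,n-i-1}_{m+ir_s,\,k+is}]\equiv\bigl(\delta_0(\nA^{\,n-i-1}_{m+ir_s,\,k+is})-\delta_0(\nA^0_{r_s,s})\bigr)\,\lambda^{0,r_s}_{\,n-i-1,\,m+ir_s}(0)\,\nA^{\,n-i-1}_{m+(i+1)r_s,\,k+(i+1)s},
\]
where the coefficient can be written out explicitly through Remark~\ref{rem:lambda0}, or, since the upper left index is $0$, through the simplified form of Remark~\ref{eq:=lambdal0}.

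The core of the proof is then the bookkeeping. After adding $\nA^n_{m,k}$, each basis vector $\nA^{\,n-i}_{m+ir_s,\,k+is}$ with $1\le i\le n-1$ receives the coefficient $\alpha_i$ from the $[\nN,\cdot]$ sum and a coefficient indexed by $i-1$ from the $[\nA^0_{r_s,s},\cdot]$ sum. Demanding that all these intermediate coefficients vanish yields the two‑term recurrence
\[
\alpha_i=a^{(1)}_{r_s,s}\bigl(\delta_0(\nA^0_{r_s,s})-\delta_0(\nA^{\,n-i}_{m+(i-1)r_s,\,k+(i-1)s})\bigr)\,\lambda^{0,r_s}_{\,n-i,\,m+(i-1)r_s}(0)\,\alpha_{i-1},\qquad \alpha_0=-1,
\]
whose telescoping solution is the closed‑form product for $\alpha_i$ recorded in the statement. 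Only the endpoint ($i=n$) term then survives uncancelled, producing the single right‑hand term $-\alpha_n\,\nA^0_{m+nr_s,\,k+ns}$, which is the claimed identity.

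I expect the genuine obstacle to lie not in the algebra of the recurrence but in justifying the filtration step: one must argue that discarding the higher $\del$‑powers (the $\rho>0$ summands of Theorem~\ref{thm:FSTC3}) is harmless for the elimination being carried out, that is, that those terms sit at a strictly deeper level of the grading $\delta_{r_s,s}$ and are dealt with separately, precisely as described in the filtering discussion before the theorem. A secondary point to verify is that every superscript stays in the admissible range $0\le l\le 2\mu$, so that each $\nA^{\,\cdot}_{\cdot,\cdot}$ appearing is a genuine basis element; this is where one keeps track of the constraint $\nN^{2\mu+1}\zz^\mu=0$ along the chain of indices $m+ir_s$.
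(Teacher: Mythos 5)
Your proposal follows essentially the same route as the paper's own proof: the identical ansatz $\nT^n_{m,k}=\sum_{i=0}^{n-1}\alpha_i\nA^{n-i-1}_{m+ir_s,k+is}$, the split of the bracket into the $[\nN,\cdot]$ shift and the $[\nA^0_{r_s,s},\cdot]$ contribution collapsed to its $\rho=0$ term via the $\del$-filtration, and the same two-term recurrence with $\alpha_0=-1$ telescoping to the stated product. The approach is correct (your $\lambda$-indices are in fact the ones consistent with Theorem \ref{thm:FSTC3}), so nothing further is needed.
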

\bpr
Define 
\bas
	\nT^n_{m,k}=\sum_{i=0}^{n-1} \alpha_i\nA^{n-i-1}_{m+ir_s,k+is}.
\eas
Then, 
\bas
[{\nX}^{r_s+2s},\nT^n_{m,k}]+\nA^n_{m,k}&=&\nA^n_{m,k}+\sum_{i=0}^{n-1} \alpha_i[{\nN},\nA^{n-i-1}_{m+ir_s,k+is}]+\sum_{i=0}^{n-1} \alpha_i[a_{r_s,s}^{(1)}{\nA}^0_{r_s,s},\nA^{n-i-1}_{m+ir_s,k+is}]
\\
&\equiv&\nA^n_{m,k}+\sum_{i=0}^{n-1} \alpha_i\nA^{n-i}_{m+ir_s,k+is}
\\&&-
\sum_{i=0}^{n-1} \alpha_i a^{(1)}_{r_s,s}
\left(\delta_0({\nA}^0_{r_s,s})-\delta_0(\nA^{n-i-1}_{m+ir_s,k+is})\right)\lambda^{0,r_s}_{n-i,m+ir_s}(0)\nA^{n-i-1}_{m+ir_s+r_s,k+is+s}
\\
&=&\nA^n_{m,k}+\alpha_0 \nA^n_{m,k}+\sum_{i=1}^{n-1} \alpha_i\nA^{n-i}_{m+ir_s,k+is}
\\&&
-\sum_{i=1}^{n-1} \alpha_{i-1} a^{(1)}_{r_s,s}
\left(\delta_0({\nA}^0_{r_s,s})-\delta_0(\nA^{n-i}_{m+ir_s-r_s,k+is-s})\right)\lambda^{0,r_s}_{n-i+1,m+ir_s-r_s}(0)\nA^{n-i}_{m+ir_s,k+is}
\\&&
-\alpha_{n-1} a^{(1)}_{r_s,s}
\left(\delta_0({\nA}^0_{r_s,s})-\delta_0(\nA^{0}_{m+nr_s-r_s,k+ns-s})\right)\lambda^{0,r_s}_{1,m+nr_s-r_s}(0)\nA^{0}_{m+nr_s,k+ns}.
\eas
Now, let 
\(\alpha _0=-1\) and 
\bas \alpha _i
-
\alpha _{i-1}a^{(1)}_{r_s,s}
\left(\delta_0({\nA}^0_{r_s,s})-\delta_0(\nA^{n-i+1}_{m+ir_s-r_s,k+is-s})\right)\lambda^{0,r_s}_{n-i+1,m+ir_s-r_s}(0)=0,\,\,\ \hbox{for}\,\,\, 1\leq i\leq n-1.
\eas
Hence, we find 
\bas
 \alpha _i
&=&
-\prod_{j=1}^{i} a^{(1)}_{r_s,s}
	\left(\delta_0({\nA}^0_{r_s,s})-\delta_0(\nA^{n-j+1}_{m+jr_s-r_s,k+js-s})\right)\lambda^{0,r_s}_{n-j+1,m+jr_s-r_s}(0),\,\,\ \hbox{for}\,\,\, 0\leq i\leq n-1.
\eas
Therefore,
\bas
[ \nT^n_{m,k},{\nX}^{r_s+2s}]+\nA^n_{m,k}
&=&-\alpha _{n}\nA^{0}_{m+nr_s,k+ns}.
\eas 
This is exactly the statement in the theorem.
\epr

\begin{thm}\label{secondNF3}
	There exists  a sequence of   invertible transformation that sends  \eqref{Eq:FL3}  into the  second level  normal form 
\ba\label{Eq:23N}
\nv^{(2)}:=\nN+a^{(1)}_{r_{s},s}\nA^0_{r_{s},s}+\sum_{p=r_{s}+1}^{\infty} \sum_{q=s+1}^{\infty} a^{(2)}_{p,q} {
	\nA}^0_{p,q},
\ea
in which \(a^{(2)}_{p,q}=0\)  where   \(p\stackrel{(1+2r_s)}{\equiv} r_s,\)   \(q=k+(1+2m)s\)  and 
\(m+2k\neq r_s+2s,\) for natural numbers \(m.\)
\end{thm}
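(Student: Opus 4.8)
The plan is to mimic the two-dimensional argument behind \eqref{22N}, with the highest-power generators $\nA^m_m\in\ker\sad_\nN$ of the $2$D case replaced by their $3$D analogues. First I would record that the lowest-weight vectors $\nA^{2m}_{m,k}=\sad_\nN^{2m}\zz^m\del^k\E$ lie in $\ker\sad_\nN$: since $\sad_\nN$ raises the upper index $l$ while $l$ is bounded by $2\mu=2m$, one has $\sad_\nN\nA^{2m}_{m,k}=0$. These are the generators that furnish the genuinely second-level relations, precisely because they are not already consumed in the passage to first level normal form.

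Next I would compute $[\nX^{r_s+2s},\nA^{2m}_{m,k}]$. Since $\nX^{r_s+2s}=\nN+a^{(1)}_{r_s,s}\nA^0_{r_s,s}$ and $[\nN,\nA^{2m}_{m,k}]=0$, this bracket equals $a^{(1)}_{r_s,s}[\nA^0_{r_s,s},\nA^{2m}_{m,k}]$, which I evaluate with the structure-constant formula \eqref{eq:3d} of Theorem \ref{thm:FSTC3}. Working modulo higher powers of $\del$ (the filtration is measured by the power of $\del$, as explained before Theorem \ref{thm:3dtransformation}), only the $\rho=0$, $p=0$ term survives, giving
\bes
[\nX^{r_s+2s},\nA^{2m}_{m,k}]\equiv a^{(1)}_{r_s,s}(m+2k-r_s-2s)\,\lambda^{0,r_s}_{2m,m}(0)\,\nA^{2m}_{r_s+m,s+k}.
\ees
By Remark \ref{rem:lambda0} one has $\lambda^{0,r_s}_{2m,m}(0)=1/\binom{2r_s+2m}{2r_s}\neq 0$, and $a^{(1)}_{r_s,s}\neq 0$ by the definition of $r_s$, so the leading coefficient is nonzero precisely when $m+2k\neq r_s+2s$.

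Then I would invoke Theorem \ref{thm:3dtransformation} with $n=2m$, which provides a transformation $\nT^{2m}_{r_s+m,s+k}$ with $[\nX^{r_s+2s},\nT^{2m}_{r_s+m,s+k}]\equiv\nA^{2m}_{r_s+m,s+k}+\alpha_{2m}\nA^0_{p,q}$. Subtracting a suitable multiple of this transformation from the kernel generator $\nA^{2m}_{m,k}$ cancels the surviving lowest-weight term: with $c=a^{(1)}_{r_s,s}(m+2k-r_s-2s)\lambda^{0,r_s}_{2m,m}(0)$ and $G=\nA^{2m}_{m,k}-c\,\nT^{2m}_{r_s+m,s+k}$ one obtains $[\nX^{r_s+2s},G]\equiv-c\,\alpha_{2m}\nA^0_{p,q}$. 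A short index computation gives $p=(r_s+m)+2m\,r_s=r_s+m(1+2r_s)$ and $q=(s+k)+2ms=k+(1+2m)s$, whence $p\stackrel{(1+2r_s)}{\equiv}r_s$, while $\alpha_{2m}\neq0$ by the product formula in that theorem (its factors being the nonzero $a^{(1)}_{r_s,s}$, nonzero $\lambda$-values, and nonzero $\delta_0$-differences). Thus $\nA^0_{p,q}$ lies in the image of $\sad_{\nX^{r_s+2s}}$ on the available generators and can be removed from \eqref{Eq:FL3}, giving $a^{(2)}_{p,q}=0$ on the stated index set; the excluded case $m+2k=r_s+2s$ is exactly where $c=0$, which leaves the kernel term reserved for the third level.

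Finally I would organise the elimination degree-by-degree with respect to the grading $\delta_{r_s,s}$, exactly as sketched before Theorem \ref{thm:3dtransformation}: fixing $\delta_{r_s,s}=\delta$ and sweeping through increasing powers of $\del$, each graded piece is finite-dimensional, so the higher-$\del$ remainders produced by the bracket above are annihilated at strictly later (higher-filtration) steps and never reintroduce an already-eliminated term. I expect the main obstacle to be this bookkeeping — establishing that the resulting infinite sequence of transformations composes to a well-defined formal transformation and that the filtration genuinely decouples the leading elimination from its tail — rather than any single computation; the three nonvanishing conditions ($\lambda^{0,r_s}_{2m,m}(0)\neq0$, $\alpha_{2m}\neq0$, and $m+2k\neq r_s+2s$) are precisely what guarantee that every targeted coefficient can in fact be annihilated.
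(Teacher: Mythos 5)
Your proposal is correct and follows essentially the same route as the paper's proof: identify the $\ker\sad_\nN$ generators $\nA^{2m}_{m,k}$, bracket them with $\nX^{r_s+2s}$ modulo higher powers of $\del$ (the nonvanishing condition $m+2k\neq r_s+2s$ being exactly $\delta_0(\nA^0_{r_s,s})\neq\delta_0(\nA^{2m}_{m,k})$), and then apply Theorem \ref{thm:3dtransformation} with $n=2m$ to push the result down to a nonzero multiple of $\nA^0_{r_s+m(1+2r_s),\,k+(1+2m)s}$. Your version is somewhat more explicit than the paper's (evaluating $\lambda^{0,r_s}_{2m,m}(0)$, tracking the indices $p,q$, and spelling out the filtration bookkeeping), but there is no substantive difference in the argument.
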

\bpr
From \bas
{[{\nN}, {\nA}^{l}_{i,k}]}&=& {\nA}^{l+1}_{i,k},
\eas
we observe that, the  null  space of  \(\nN\)  is generated by  
\({	\nA}^{2m}_{m,k}\) for all natural numbers  \(m,k.\) 
Therefore,  we  use 
this kernel  term  to remove an  extra term
from  \eqref{Eq:FL3}:
 \bas
 [{	\nA}^{2m}_{m,k},\nX^{r_s+2s}]\equiv \left(\delta_0(\nA^0_{r_s,s})-\delta_0(\nA^{2m}_{m,k})\right)\lambda^{2m,m}_{0,r_s}(0){	\nA}^{2m}_{m+r_s,k+s}.
 \eas
 Assume that 
  \(\delta_0(\nA^0_{r_s,s})\neq\delta_0(\nA^{2m}_{m,k}).\) Using 
Theorem \eqref{thm:3dtransformation} we  know that  there exists a  transformation as  
\(\nT^{2m}_{m+r_s,k+s}\)  such that 
\bas
&&[{	\nA}^{2m}_{m,k}+\left(\delta_0(\nA^0_{r_s,s})-\delta_0(\nA^{2m}_{m,k})\right)\lambda^{2m,m}_{0,r_s}(0)\nT^{2m}_{m+r_s,k+s},\nX^{r_s+2s}]
\\
&&=-\left(\delta_0(\nA^0_{r_s,s})-\delta_0(\nA^{2m}_{m,k})\right)\lambda^{2m,m}_{0,r_s}(0)
\alpha_{2m}\nA^{0}_{m+r_s+2mr_s,k+s+2ms},
\eas
which is non-zero. Hence, the proof follows.
\epr
As  it shows for \(\delta_0(\nA^0_{r_s,s})=\delta_0(\nA^{2m}_{m,k})\) which 
reads \(m_1=m=r_s+2(s-k)\)
the term   \(\nA^{2m_1}_{m_1,k}\) is kernel of \(\nX^{r_s+2s}.\) We shall use this kernel for further possible reduction of terms form        the \eqref{Eq:23N}. 
Rewrite  \eqref{Eq:23N} by
\bas
\nv^{(2)}:=\nN+a^{(1)}_{r_{s},s}\nA^0_{r_{s},s}+a^{(2)}_{r_{s_1},s_1}\nA^0_{r_{s_1},s_1}+\sum_{p=r_{s_1}+1}^{\infty} \sum_{q=s_1+1}^{\infty} a^{(2)}_{p,q} {
	\nA}^0_{p,q}.
\eas
\begin{thm}
	The  unique normal form of \eqref{Eq:FL3} is given by
\ba\label{Eq:33N}
\nv^{(3)}:=\nN+a^{(1)}_{r_{s},s}\nA^0_{r_{s},s}
+a^{(2)}_{r_{s_1},s_1}\nA^0_{r_{s_1},s_1}
+\sum _{p=r_{s_1}+1}^{\infty} \sum_{p={s_1}+1}^{\infty} a^{(3)}_{p,q}{
	\nA}^0_{p,q},
\ea
in which \(a^{(3)}_{p,q}=0\)  where   \(p\stackrel{(1+2r_s)}{\equiv} r_s,\)   \(p\stackrel{(1+2r_{s_1})}{\equiv} r_{s_1},\) and 
\(q=k+s_1+2m_1s_1\) where \(m_1\in \mathbb{N}\) and \(m_1=r_s+2(s-k).\)
\end{thm}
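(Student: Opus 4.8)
The plan is to run the third level exactly as the two--dimensional argument of Section~\ref{2DBT}, with the single kernel generator $\nA^{\nu_1}_{\nu_1}$ there replaced by the $\del$--indexed families of kernel elements that are available in three dimensions. I would start from the second level form $\nv^{(2)}$ of Theorem~\ref{secondNF3}: the only transformations that still preserve it are generated by the elements of $\ker\sad_\nN$ that survive, modulo higher powers of $\del$, into $\ker\sad_{\nX^{r_s+2s}}$, and the third level amounts to letting these act against the first nontrivial second level term $\nA^0_{r_{s_1},s_1}$.

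\textbf{The mechanism.} Three steps carry this out. First, since $[\nN,\nA^l_{i,k}]=\nA^{l+1}_{i,k}$, the kernel of $\sad_\nN$ is spanned by the highest weight vectors $\nA^{2m}_{m,k}$, and among these the ones annihilated by $\sad_{\nX^{r_s+2s}}$ at leading filtration order are exactly those with $\delta_0(\nA^{2m}_{m,k})=\delta_0(\nA^0_{r_s,s})$, i.e. $m=m_1=r_s+2(s-k)$ --- the degenerate case already isolated after Theorem~\ref{secondNF3}. Second, I would bracket such a generator $\nA^{2m_1}_{m_1,k}$ against $\nA^0_{r_{s_1},s_1}$ using the structure constants of Theorem~\ref{thm:FSTC3}; working modulo higher powers of $\del$ as justified by \eqref{COR:MODDELTA}, the leading term is a nonzero multiple of a single highest weight vector, the scalar being the weight factor $m_1+2k-r_{s_1}-2s_1$ times $\lambda^{0,r_{s_1}}_{2m_1,m_1}(0)$. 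Third, because that vector is not yet in $\ker\sad_\nM$, I would clear it with the transformation of Theorem~\ref{thm:3dtransformation} taken with $n=2m_1$: adding the exact correction $[\nX,\nT]$ converts the bracket into a nonzero multiple of a single $\nA^0_{p,q}$, whose index pair is the shift prescribed by that lemma. Reading $p$ and $q$ off gives the residue conditions and the formula for $q$ in the statement --- the congruence $p\equiv r_s$ inherited from Theorem~\ref{secondNF3} and the congruence $p\equiv r_{s_1}$ new at this level. Uniqueness then follows because a term can be removed only when that weight factor, together with the accompanying $\lambda$ and the $\alpha_{2m_1}$ of Theorem~\ref{thm:3dtransformation}, is nonzero, and the terms for which it vanishes are precisely those left in $\nv^{(3)}$; no further generator remains.

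\textbf{Main obstacle.} The delicate point is not any single transvectant but the filtered bookkeeping. Since the structure constants couple each term only to terms of strictly higher $\del$--power, the whole elimination must be organised grade by grade --- fixing the grading $\delta_{r_s,s}$ and sweeping upward in the power of $\del$, as described just before Theorem~\ref{thm:3dtransformation} --- and I must verify both that clearing one term never resurrects an already cleared one and that the weight factor controlling each step is tracked correctly, since it is exactly the vanishing of that factor that decides which $\nA^0_{p,q}$ persist. Getting this degeneracy analysis to reproduce the stated congruences and the $q$--formula is where the real work lies.
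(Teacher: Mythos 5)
Your proposal follows the paper's own argument essentially step for step: the sole remaining generator is the kernel element $\nA^{2m_1}_{m_1,k}$ with $m_1=r_s+2(s-k)$, its bracket with $\nA^0_{r_{s_1},s_1}$ (taken modulo higher powers of $\del$) is corrected by the transformation of Theorem \ref{thm:3dtransformation} with $n=2m_1$ into a nonzero multiple of a single $\nA^0_{p,q}$, and the surviving coefficients are read off from the resulting index shift exactly as in the paper. The only quibble is terminological: the bracket's leading term $\nA^{2m_1}_{m_1+r_{s_1},k+s_1}$ sits in the interior of its $\nN$-orbit rather than being a highest weight vector, but since you immediately observe it is not in $\ker\sad_\nM$ and clear it accordingly, the argument is unaffected.
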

\bpr
The only transformation that has not been used so far is \(\nA^{2m_1}_{m_1,k}.\)
Hereby,
\bas
[\nA^{2m_1}_{m_1,k},\nA^0_{r_{s_1},s_1}]\equiv
\left(
\delta_{0}(\nA^0_{r_{s_1},s_1})-\delta_0
(\nA^{2m_1}_{m_1,k})\right) \lambda^{2m_1,m_1}_{0,r_s}(0)\nA^{2m_1}_{{m_1+r_{s_1},k+s_1}}.
\eas
Using 
Theorem \ref{thm:3dtransformation} we have 
\bas
&&[\nA^{2m_1}_{m_1,k},\nA^0_{r_{s_1},s_1}]+[\left(
\delta_{0}(\nA^0_{r_{s_1},s_1})-\delta_0
(\nA^{2m_1}_{m_1,k})\right)\lambda^{2m_1,m_1}_{0,r_s}(0)\nT^{2m_1}_{{m_1+r_s,k+s}},\nX^{r_s+2s}]\equiv
 \\&&-\alpha_{2m_1}\left(
 \delta_{0}(\nA^0_{r_{s_1},s_1})-\delta_0
 (\nA^{2m_1}_{m_1,k})\right)\lambda^{2m_1,m_1}_{0,r_s}(0)\nA^{0}_{m_1+r_{s_1}+2m_1r_s,k+s_1+2m_1s_1}.
\eas

\epr
	\begin{rem}
		One should note that for  given \(\delta=j\) one can have more  than one kernel term. 
		Then the matter is  to figure out whether these  kernel  terms  generate   a common  vector field in the process of finding  the unique normal form.
	Assume that 
		 two elements with grade \(j\) such as \(\nA^{2{m_1}}_{{m_1},{k_1}},\nA^{2{m_2}}_{{m_2},{k_2}}\) where \(m_1\neq m_2\) and \(k_1\neq k_2\) are given. Then,
	\bas
{[	{\nA}^{2m_1}_{m_1,k_1},\nA^0_{r_s,s}]}&=&\left<{\nA}^{n_1}_{{{m_1}+{r_s}-2\,p_1},{k_1+s+p_1}}\right>_{2p_1+n_1={2m_1}},
\\
{[{\nA}^{2m_2}_{m_2,k_2},\nA^0_{r_s,s}]}&=&\left<{\nA}^{n_2}_{{{m_1}+{r_s}-2\,p_2},{k_2+s+p_2}}\right>_{2p_2+n_2={2m_2}}.
\eas
Suppose   that these kernel terms generate the same terms then it means that there exist \(p_1\) and \(p_2\) such that 
\bas
{\nA}^{n_1}_{{{m_1}+{r_s}-2\,p_1},{k_1+s+p_1}}={\nA}^{n_2}_{{{m_2}+{r_s}-2\,p_2},{k_2+s+p_2}},
\eas
and this implies that 
\(
n_1=n_2,\,\,
{{m_1}-2\,p_1}={{m_2}-2\,p_2},\,\hbox{and}\,\,
{k_1+p_1}=k_2+p_2.
\)
On the other  hand we have 
\bas
2p_1+n_1={2m_1},  \hbox{and}\,\,2p_2+n_2={2m_2},
\eas
	since \(n_1=n_2\) then one has \(m_1-m_2=p_1-p_2\) which is  in contradiction with \(m_1-m_2=2(p_1-p_2)\) when \(m_1\neq m_2.\)
			
	\end{rem}

\section{Example from \(\widehat{\mathscr{A}}_2\)}
In this part, the  unique normal form up to fourth order  for a given \(3\)D nilpotent singularities is computed using the results from Sections \ref{sec:strucc3} and \ref{3DBT}.
Consider  the following  dynamical system
\bas
{\dot{x}}&=&\left( a^0_{{1,0}}z+2\,a^1_{{1,0}}y+2\,a^2_{{1,0}}x
\right) x
\\&&
+ \Big( a^0_{{2,0}}{z}^{2}+4\,a^1_{{2,0}}yz-a^0_{{0,1}}{y}^{2}+8\,a^2_{{2,0}}{y}^{2}
+\left(a^0_{{0,1}}+4\,a^2_{{2,0}}\right)xz
+24\,a^3_{{2,0}}x
y+24\,a^4_{{2,0}}{x}^{2} \Big) x
\\&&
+ \Big( a^0_{{3,0}}{z}^{3}+6\,a^1_{{3,0}}y{z}^{2}
+\left(-a^0_{{1,1}}
+24\,a^2_{{3,0}}\right){y}^{2}z+\left(-2\,a^1_{{1,1}}+48\,a^3_{{3,0}}\right){y}^{3}+\left(a^0_{{1,1}}+6\,
a^2_{{3,0}}\right)x{z}^{2}
\\&&
+\left(2\,a^1_{{1,1}}+72
\,a^3_{{3,0}}\right)xyz+\left(-2\,a^2_{{1,1}}
+
288\,a^4_{{3,0}}\right)x{y}^{2}+\left(2\,a^2_{{1,1}}+72\,a^4_{{3,0}}\right){x}^{2}z
\\&&
+720\,a^5_{{3,0}}{x}^{2}y+720\,a^6_{{3,0}}{x}^{3} \Big) x,
\\
{\dot{y}}&=&x+
 \left( a^0_{{1,0}}z+2\,a^1_{{1,0}}y+2\,a^2_{{1,0}}x
\right) y+ \Big( a^0_{{2,0}}{z}^{2}+4\,a^1_{{2,0}}yz-a^0_{{0,1}}{y}^{2}+8\,a^2_{{2,0}}{y}^{2}+a^0_{{0,
		1}}xz
	+4\,a^2_{{2,0}}xz
		\\&&
	+24\,a^3_{{2,0}}xy
+24\,a^4_{{2,0}}{x}^{2} \Big) y
+ \Big( a^0_{{3,0}}{z}^{3}+6\,a^1_{{3,0}}y{z}^{2}-a^0_{{1,1}}{y}^{2}z+24\,a^2_{{3,0}}{y}^{2}z-2\,a^1_{{1,1}}{y}^{3}
+48\,a^3_{{3,0}}{y}^{3}
\\&&
+a^0_{{1,1}}x{z}^{2}
+6\,
a^2_{{3,0}}x{z}^{2}+2\,a^1_{{1,1}}xyz+72
\,a^3_{{3,0}}xyz-2\,a^2_{{1,1}}x{y}^{2}+
288\,a^4_{{3,0}}x{y}^{2}+2\,a^2_{{1,1}}{x}^{2}z
\\&&
+72\,a^4_{{3,0}}{x}^{2}z
+720\,a^5_{{3,0}}{x}^{2}y
	+720\,a^6_{{3,0}}{x}^{3} \Big) y,
\\
{\dot{z}}&=& 2y+
\left( a^0_{{1,0}}z+2\,a^1_{{1,0}}y+2\,a^2_{{1,0}}x
\right) z
\\&&
+ \left( a^0_{{2,0}}{z}^{2}+4\,a^1_{{2,0}}yz-a^0_{{0,1}}{y}^{2}+8\,a^2_{{2,0}}{y}^{2}+a^0_{{0,
		1}}xz+4\,a^2_{{2,0}}xz+24\,a^3_{{2,0}}xy+24\,a^4_{{2,0}}{x}^{2} \right) z
	\\&&
	+ \Big( a^0_{{3,0}}{z}^{3}+6\,a^1_{{3,0}}y{z}^{2}-\left(a^0_{{1,1}}+24\,a^2_{{3,0}}\right){y}^{2}z-2\,a^1_{{1,1}}{y}^{3}
	+48\,a^3_{{3,0}}{y}^{3}+\left(a^0_{{1,1}}+6\,
a^2_{{3,0}}\right)x{z}^{2}
\\&&
+\left(2\,a^1_{{1,1}}+72
\,a^3_{{3,0}}\right)xyz
-\left(2\,a^2_{{1,1}}+
288\,a^4_{{3,0}}\right)x{y}^{2}+\left(2\,a^2_{{1,1}}+72\,a^4_{{3,0}}\right){x}^{2}z
+720\,a^5_{{3,0}}{x}^{2}y+720\,a^6_{{3,0}}{x}^{3} \Big) z,
\eas
or equivalently 
\ba\label{eq:3NS}
v^{(0:0)}_{[4]}&:=&\nN+v_1^{(0:0)}+v_2^{(0:0)}+v_3^{(0:0)},
\ea
where the subindex \([4]\) tells us we do not consider terms of grade \(4\) and higher, and
\bas
v_1^{(0:0)}&=&a^0_{1,0}\nA^0_{1,0}+a^2_{1,0}\nA^2_{1,0}+a^1_{1,0}\nA^1_{1,0},
\\
v^{(0:0)}_2&=&a^0_{0,1}\nA^0_{0,1}
+a^1_{2,0}\nA^1_{2,0}+a^0_{2,0}\nA^0_{2,0}
+a^2_{2,0}\nA^2_{2,0}
+a^3_{2,0}\nA^3_{2,0}+a^4_{2,0}\nA^4_{2,0},
\\
v_3^{(0:0)}&=&
a^0_{1,1}\nA^0_{1,1}+a^0_{3,0}\nA^0_{3,0}+a^1_{1,1}\nA^1_{1,1}+a^1_{3,0}\nA^1_{3,0}
+a^2_{1,1}\nA^2_{1,1}+a^2_{3,0}\nA^2_{3,0}
\nonumber
\\&&
+a^3_{3,0}\nA^3_{3,0}+a^4_{3,0}\nA^4_{3,0}+a^5_{3,0}\nA^5_{3,0}
+a^6_{3,0}\nA^6_{3,0},
\eas
and the coefficients  are  real constants. 
\begin{thm}
	The unique normal form of \eqref{eq:3NS} up to order four is given by
	Equivalently, 
	\bas
	{\dot{x}}&=&a^0_{0,1}zx+a^0_{0,1}(xz-y^2)x+a^0_{2,0}z^2x
	\\&&
	+\left( a^1_{{1,0}}a^0_{{1,1}}+24\,a^1_{{2,0}}a^2_{{3,0}}-6\,a^2_{{1,0}}a^2_{{3,0}}-6\,a^2_{{2,0}}a^2_{{3,0}}+4\,a^3_{{2,0}}a^2_{{3,0}}
	+a^0_{{1,1}} \right)xz(xz-y^2)
	\\&&
	+a^0_{{3,0}} \left( a^1_{{1,0}}+a^1_{{2,0}}+
	1 \right)xz^3,
	\\
	{\dot{y}}&=&x+a^0_{0,1}zy+a^0_{0,1}(xz-y^2)y+a^0_{2,0}z^2y
	\\&&
	+\left( a^1_{{1,0}}a^0_{{1,1}}+24\,a^1_{{2,0}}a^2_{{3,0}}-6\,a^2_{{1,0}}a^2_{{3,0}}-6\,a^2_{{2,0}}a^2_{{3,0}}+4\,a^3_{{2,0}}a^2_{{3,0}}
	+a^0_{{1,1}} \right)yz(xz-y^2)
	\\&&
	a^0_{{3,0}} \left( a^1_{{1,0}}+a^1_{{2,0}}+
	1 \right)yz^3,
	\\
	{\dot{z}}&=&2y+a^0_{0,1}z^2+a^0_{0,1}(xz-y^2)z+a^0_{2,0}z^3
	\\&&
	+\left( a^1_{{1,0}}a^0_{{1,1}}+24\,a^1_{{2,0}}a^2_{{3,0}}-6\,a^2_{{1,0}}a^2_{{3,0}}-6\,a^2_{{2,0}}a^2_{{3,0}}+4\,a^3_{{2,0}}a^2_{{3,0}}
	+a^0_{{1,1}} \right)z^2(xz-y^2),
	\\&&
	+a^0_{{3,0}} \left( a^1_{{1,0}}+a^1_{{2,0}}+
	1 \right)z^4.
	\eas
	And  in terms of invariants \(\zz,\del\)  is as
	\bas
	{\dot{x}}&=&a^0_{0,1}\zz x+a^0_{0,1}\del x+a^0_{2,0}\zz^2 x
	\\&&
	+\left( a^1_{{1,0}}a^0_{{1,1}}+24\,a^1_{{2,0}}a^2_{{3,0}}-6\,a^2_{{1,0}}a^2_{{3,0}}-6\,a^2_{{2,0}}a^2_{{3,0}}+4\,a^3_{{2,0}}a^2_{{3,0}}
	+a^0_{{1,1}} \right)\zz \del x
	\\&&
	+a^0_{{3,0}} \left( a^1_{{1,0}}+a^1_{{2,0}}+
	1 \right)\zz^3 x,
	\\
	{\dot{y}}&=&x+a^0_{0,1}\zz y+a^0_{0,1}\del y+a^0_{2,0}\zz^2 y
	\\&&
	+\left( a^1_{{1,0}}a^0_{{1,1}}+24\,a^1_{{2,0}}a^2_{{3,0}}-6\,a^2_{{1,0}}a^2_{{3,0}}-6\,a^2_{{2,0}}a^2_{{3,0}}+4\,a^3_{{2,0}}a^2_{{3,0}}
	+a^0_{{1,1}} \right)\zz\del y
	\\&&
	a^0_{{3,0}} \left( a^1_{{1,0}}+a^1_{{2,0}}+
	1 \right)\zz^3 y,
	\\
	{\dot{z}}&=&2y+a^0_{0,1}\zz z+a^0_{0,1}\del z+a^0_{2,0}\zz^2 z
	\\&&
	+\left( a^1_{{1,0}}a^0_{{1,1}}+24\,a^1_{{2,0}}a^2_{{3,0}}-6\,a^2_{{1,0}}a^2_{{3,0}}-6\,a^2_{{2,0}}a^2_{{3,0}}+4\,a^3_{{2,0}}a^2_{{3,0}}
	+a^0_{{1,1}} \right)\zz \del z
	\\&&
	+a^0_{{3,0}} \left( a^1_{{1,0}}+a^1_{{2,0}}+
	1 \right)\zz^3 z.
	\eas
	\end{thm}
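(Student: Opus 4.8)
The system \eqref{eq:3NS} is already presented in the basis $\{\nA^l_{\mu,k}\}$ of the Lie algebra $\mathscr{A}_2$, so my plan is to run the three-level normalization of Section \ref{3DBT} verbatim, evaluating every Lie bracket through the explicit structure constants of Theorem \ref{thm:FSTC3} (equivalently through the product formula \eqref{eq:nn}). Since the subindex $[4]$ discards everything of $\delta_0$-grade $\ge 4$, and the monomials of $\ker\sad_\nM$ with $\mu+2k\le 3$ are exactly $\nA^0_{1,0},\nA^0_{0,1},\nA^0_{2,0},\nA^0_{1,1},\nA^0_{3,0}$, the end result must be a linear combination of these five vector fields; the whole problem is to pin down their coefficients and to check that none of them can be removed.

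First I would compute the first level ($\Sl$-style) normal form. By the first lemma of Section \ref{3DBT}, the homological equation at each grade is solved term by term, $\nA^{l-1}_{\mu,k}$ being the generator that clears $\nA^{l}_{\mu,k}$ for $l>0$; this pushes the entire nonlinearity into $\ker\sad_\nM$ and leaves only the five monomials above up to grade $3$. The coefficients $a^{(1)}_{\mu,k}$ are then assembled from the iterated Lie transform: besides cancelling the intended term, each generator, bracketed against the already-present lower-grade terms and re-expanded on the $\{\nA^0_{\mu,k}\}$-basis, contributes to the higher-grade coefficients. Every such bracket is a closed-form instance of $\lambda^{l_1,\mu_1}_{l_2,\mu_2}(\rho)$ from \eqref{eq:lambda}, so the numerical factors decorating the products of the $a^{(0)}$'s in the statement should be read off directly from the corresponding $\lambda$'s; this is the step I expect to be the genuine labour.

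Next I would verify that levels two and three add nothing up to order four. Writing $r_0=\min\{m:a^{(1)}_{m,0}\neq 0\}=1$ (generically $a^{(1)}_{1,0}=a^0_{1,0}\neq0$) and $\nX^{r_0}=\nN+a^{(1)}_{1,0}\nA^0_{1,0}$, Theorem \ref{secondNF3} removes only the monomials $\nA^0_{p,q}$ with $p\equiv r_0\pmod{1+2r_0}$ and $p>r_0$, i.e. $p\equiv 1\pmod 3$ with $p\ge4$, all of $\delta_0$-grade $p+2q\ge4$; the unique normal form theorem, which uses the $\sad_\nN$-kernel generators $\nA^{2m}_{m,k}$ together with the transformation formula of Theorem \ref{thm:3dtransformation}, likewise only reaches grade $\ge4$. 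In particular $\nA^0_{1,1}$ (here $p=1=r_1$ for the filtration $s=1$) and $\nA^0_{3,0}$ ($p=3\not\equiv1\pmod 3$) are irremovable, so up to order four the unique normal form is precisely the truncated first level normal form. I would finish by substituting $\nA^0_{\mu,k}=\zz^\mu\del^k\E$ with $\zz=z$ and $\del=xz-y^2$ to recover the two displayed coordinate presentations.

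The main obstacle is the explicit determination of the grade-$3$ coefficients $a^{(1)}_{1,1}$ and $a^{(1)}_{3,0}$: one must collect every contribution that the lower-grade generators make to $\nA^0_{1,1}$ and $\nA^0_{3,0}$ through the full Lie series, expand each resulting bracket on the $\{\nA^0_{\mu,k}\}$-basis via \eqref{eq:nn}, and sum. This is the kind of bookkeeping that in \cite{JF2019vector} was handled by computer algebra and extrapolation; the point of the present formalism is that each bracket becomes a single evaluation of $\lambda^{l_1,\mu_1}_{l_2,\mu_2}(\rho)$, turning the computation into a finite, organized sum. A lighter but still necessary obstacle is confirming the irremovability claims, which reduces to checking the congruence conditions of Theorem \ref{secondNF3} and of the unique normal form theorem for the value $r_0=1$.
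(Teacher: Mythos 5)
Your strategy matches the paper's proof: a first level ($\Sl$-style) normalization carried out grade by grade with the brackets evaluated through the $\lambda^{l_1,\mu_1}_{l_2,\mu_2}(\rho)$ structure constants of Theorem \ref{thm:FSTC3}, followed by the observation that the $\ker\sad_\nN$ generators $\nA^{2m}_{m,k}$ can only reach terms of grade at least four, so the truncated first level form is already unique; the paper just packages the first level step as two explicit generators $\nT_1^1+\nT_2^1$ and $\nT_3^2$ and writes out the resulting coefficients, which is the labour you correctly flag but do not perform. Your congruence-based irremovability check via Theorem \ref{secondNF3} is equivalent to the paper's grade bound, so the approaches are essentially identical.
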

\bpr
At the first level of normal form  we transform the first and the second order terms  in the normal form simultaneously. Set
\bas
\nT_{1}^1&=&a^1_{1,0}\nA^0_{1,0}+a^2_{1,0}\nA^1_{1,0},
\\
\nT_2^1&=&
a^1_{2,0}\nA^0_{2,0}
+a^2_{2,0}\nA^1_{2,0}
+a^3_{2,0}\nA^2_{2,0}+a^4_{2,0}\nA^3_{2,0}.
\eas
By applying this transformation we  have 
\bas
v^{{(0:1)}}_{[4]}&:=&v^{(0:0)}+[\nT_1^1+\nT^1_2,v^{(0:0)}_{[4]}]
\\&=&
\nN+v_1^{{(0:1)}}+v_2^{{(0:1)}}+v_3^{{(0:1)}}.
\eas
In what follows, we  compute  a few structure constants as  an illustration of  Theorem \ref{thm:FSTC3}.

For instance the  Lie bracket of the first term of \(\nT_1^1\)  with the last term of \(v_2^{(0:0)}\)  contains 
\bas
{[}\nA_{1,0}^{0},\nA_{2,0}^{4}{]}&=&
\sum_{\rho=0}^1  \lambda^{0,1}_{4,2}(\rho) \nA^{4-2\rho}_
{3-2\rho,{\rho}}\\
&=& \lambda^{0,1}_{4,2}(0)\nA^{4}_{3,0}+  \lambda^{0,1}_{4,2}(1)\nA^{2}_{1,1}
\\
&=&
 \frac {48\,}{5}\nA^{4}_{3,0}+ \frac{1}{15}\nA^{2}_{1,1}.
\eas
Following \eqref{eq:lambda} we have
\bas
[\nA^0_{1,0},\nA^0_{0,1}]&=&\lambda^{0,1}_{0,0}(0)\nA^0_{1,1}=\nA^0_{1,1},
\\
{[\nA^0_{1,0},\nA^1_{2,0}]}&=&\lambda^{0,1}_{1,2}(0)\nA^1_{3,0}=\frac{2}{3}\nA^1_{3,0},
\\
{[\nA^0_{1,0},\nA^0_{2,0}]}&=&\lambda^{0,1}_{0,3}(0)\nA^0_{3,0}=\nA^0_{3,0},
\\
{[\nA^0_{1,0},\nA^2_{2,0}]}
&=&\lambda^{0,1}_{2,2}(0)\nA^2_{3,0}+\lambda^{0,1}_{2,2}(1)\nA^0_{1,1}=\frac{2}{5}\nA^2_{3,0}+\frac{8}{5}\nA^0_{1,1},
\\
{[\nA^0_{1,0},\nA^3_{2,0}]}&=&
\lambda^{0,1}_{3,2}(0)\nA^3_{3,0}+\lambda^{0,1}_{3,2}(1)\nA^1_{1,1}=\frac{1}{5}\nA^2_{3,0}+\frac{24}{5}\nA^0_{1,1}.
\eas
Note that for computing \(\lambda^{l_1,\mu_1}_{l_2,\mu_2}(0)\) and  \(\lambda^{0,\mu_1}_{l_2,\mu_2}(\rho)\), one can use the 
explicit formulas  of Remarks \ref{rem:lambda0} and  \ref{eq:=lambdal0}, respectively.

Now, we continue the normalization process. Hence, \(v^{(0:1)}_{[4]}\) has the following  components
\bas
v_1^{{(0:1)}}&=&a^0_{1,0}\nA^0_{1,0},
\\
v_2^{{(0:1)}}&=&a^0_{0,1}\nA^0_{0,1}
+a^0_{2,0}\nA^0_{2,0},
\\
v_3^{{(0:1)}}&=&
\left( a^1_{{1,0}}a^0_{{1,1}}+24\,a^1_{{2,0}}a^2_{{3,0}}-6\,a^2_{{1,0}}a^2_{{3,0}}-6\,a^2_{{2,0}}a^2_{{3,0}}+4\,a^3_{{2,0}}a^2_{{3,0}}
+a^0_{{1,1}} \right) \nA^0_{{1,1}}
+a^0_{{3,0}} \left( a^1_{{1,0}}+a^1_{{2,0}}+
1 \right) \nA^0_{{3,0}}
\\&&
+ \left( 24\,a^1_{{1,0}}a^3_{{3,0}}+a^2_{{1,0}}a^1_{{1,1}}+24\,a^2_{{2,0}}a^3_{{3,0}}-16\,a^3_{{2,0}}a^3_{{3,0}}+24\,a_
{{4,2,0}}a^3_{{3,0}}+a^1_{{1,1}} \right) \nA^1_{{1,1}}
\\&&
+a^1_{{3,0}}
\left( a^1_{{1,0}}+a^1_{{2,0}}+a^2_{{1,0}}+a^2_{{2,0}}+1 \right) \nA^1_{{3,0}}
\\&&
+\left(144\,a^1_{{1,0}}a^4_{{3,0}}-36\,a^2_{{1,0}}a^4_{{3,0}}
+24\,a^3_{{2,0}}a^4_{{3,0}}-36\,a^4_{{2,0}}a^4_{{3,0}}+a^2_{{1,1}}\right)\nA^2_{{1,1}}
\\&&
+\left(a^1_{{1,0}}
a^2_{{3,0}}+a^1_{{2,0}}a^2_{{3,0}}+a^2_{{1,0}}a^2_{{3,0}}+a^2_{{2,0}}a^2_{3,0}+a^3_{{2,0}}a^2_{{3,0}}+a^2_{{3,0}}\right)
\nA^2_{{3,0}}
\\&&
+ \left( a^1_{{1,0}}a^3_{{3,0}}+a^2_{{1,0}}a^3_{{3,0}}+a^2_{{2,0}}a^3_{{3,0}}
+a^3_{{2,0}}a^3_{{3,0}}+a^4_{{2,0}}a^3_{{3,0}}+a^3_{{3,0}} \right) \nA^3_{{3,0}}
\\&&
+ \left( a^1_{{1,0}}a^4_{{3,0}}+a^2_{{1,0}}a^4_{{3,0}}+a^3_{{2,0}}a^4_{{3,0}}+a^4_{2,0}a^4_{{3,0}}+a^4_{{3,0}} \right) \nA^4_{{3,0}}
+ \left( a^2_{{1,0}}a^5_{{3,0}}+a^4_{{2,0}}a^5_{{3,0}}+a^5_{{3,0}} \right) \nA^5_{{3,0}}
\\&&
+a^6_{{3,0}}\nA^6_{{3,0}}.
\eas
At the end we apply \(\nT_3^2\) to \(v^{(0:1)}_{[4]}\)  to find  \(v^{(0:2)}_{[4]}.\)
Define
\bas
\nT_3^2&:=&
\left( 24\,a^1_{{1,0}}a^3_{{3,0}}+a^2_{{1,0}}a^1_{{1,1}}+24\,a^2_{{2,0}}a^3_{{3,0}}-16\,a^3_{{2,0}}a^3_{{3,0}}+24\,a^4_{{2,0}}a^3_{{3,0}}+a^1_{{1,1}} \right) \nA^0_{{1,1}}
\\&&
+a^1_{{3,0}}
\left( a^1_{{1,0}}+a^1_{{2,0}}+a^2_{{1,0}}+a^2_{{2,0}}+1 \right) \nA^0_{{3,0}}
\\&&
+\left(144\,a^1_{{1,0}}a^4_{{3,0}}-36\,a^2_{{1,0}}a^4_{{3,0}}
+24\,a^3_{{2,0}}a^4_{{3,0}}-36\,a^4_{{2,0}}a^4_{{3,0}}+a^2_{{1,1}}\right)\nA^1_{{1,1}}
\\&&
+\left(a^1_{{1,0}}
a^2_{{3,0}}+a^1_{{2,0}}a^2_{{3,0}}+a^2_{{1,0}}a^2_{{3,0}}+a^2_{{2,0}}a^2_{3,0}+a^3_{{2,0}}a^2_{{3,0}}+a^2_{{3,0}}\right)
\nA^1_{{3,0}}
\\&&
+ \left( a^1_{{1,0}}a^3_{{3,0}}+a^2_{{1,0}}a^3_{{3,0}}+a^2_{{2,0}}a^3_{{3,0}}
+a^3_{{2,0}}a^3_{{3,0}}+a^4_{{2,0}}a^3_{{3,0}}+a^3_{{3,0}} \right) \nA^2_{{3,0}}
\\&&
+ \left( a^1_{{1,0}}a^4_{{3,0}}+a^2_{{1,0}}a^4_{{3,0}}+a^3_{{2,0}}a^4_{{3,0}}+a^4_{2,0}a^4_{{3,0}}+a^4_{{3,0}} \right) \nA^3_{{3,0}}
\\&&
+ \left( a^2_{{1,0}}a^5_{{3,0}}+a^4_{{2,0}}a^5_{{3,0}}+a^5_{{3,0}} \right) \nA^4_{{3,0}}+a^6_{{3,0}}\nA^5_{{3,0}}.
\eas
Then, we  have
\bas
v^{(0:2)}_{[4]}&:=&v^{(0:1)}+[\nT_3^2,v^{(0:1)}]
\\&=&
\nN+v_1^{(0:1)}+v_2^{(0:1)}+v_3^{(0:2)},
\eas
where 
\bas
v_3^{(0:2)}&:=&\left( a^1_{{1,0}}a^0_{{1,1}}+24\,a^1_{{2,0}}a^2_{{3,0}}-6\,a^2_{{1,0}}a^2_{{3,0}}-6\,a^2_{{2,0}}a^2_{{3,0}}+4\,a^3_{{2,0}}a^2_{{3,0}}
+a^0_{{1,1}} \right) \nA^0_{{1,1}}
\\&&
+a^0_{{3,0}} \left( a^1_{{1,0}}+a^1_{{2,0}}+
1 \right) \nA^0_{{3,0}}.
\eas
Following   Section \ref{3DBT} 
the \(\ker\sad_\nN\)-terms   can not eliminate any terms  from \(v^{(0:2)}_{[4]}\), since the grade is always less than seven. Hence,
 the second  level normal form \(v^{(0:2)}_{[4]}\) is the unique normal form (up till grade three).
\epr
	\section{Concluding remarks}
   We have shown how the inversion formula for the Clebsch-Gordan coefficients (or $3j$-symbols) enables us to determine the structure constants for a basis of the Lie algebra of polynomial vector fields that is adapted to the action of the nilpotent linear vector field in the \(3\)D case. In principle, this is a procedure that could be generalized to higher dimensions, although this will no doubt require some serious work.

  Once the structure constants  have been determined, the analysis of the higher level normal form, and ultimately the unique normal form, comes within reach. Even in the \(2\)D case, 
  this is a nontrivial problem, that will have more and more subcases as the dimension grows.

   In addition,  
  these formulas may prove useful in the study of the convergence of the (first level) normal form where one needs to estimate the 
  transformation  through the Homological Equation. We refer the interested reader to \cite{stolovitch2016holomorphic}, where parts of the proof show a striking similarity to the techniques employed in the present paper and we hope to improve on these results using the techniques described in \cite{mokhtari2019versal}.

	\bibliographystyle{plain}
\bibliography{SP} 
\end{document}